\newtheorem{fact}{Fact}	
\newtheorem{lemma}{Lemma}
\newtheorem{theorem}{Theorem}
\newcommand{\R}{\mathbb{R}}
\newcommand{\Diag}{\mathrm{Diag}}
\newcommand{\diag}{\mathrm{diag}}
\def\norm#1{\left\|#1\right\|}
\begin{document}

\title{A Sparse Smoothing Newton Method for Solving Discrete Optimal Transport Problems}

\author{Di Hou~\thanks{Department of Mathematics, National University of Singapore, Singapore. \url{dihou@u.nus.edu}}, Ling Liang~\thanks{(Corresponding author) Department of Mathematics,
  University of Maryland at College Park, USA. \url{liang.ling@u.nus.edu}}, and Kim-Chuan Toh~\thanks{Department of Mathematics and Institute of Operations Research and Analytics, National University of Singapore, Singapore. \url{mattohkc@nus.edu.sg}}}
\date{}  
 
\maketitle

\begin{abstract}
The discrete optimal transport (OT) problem, which offers an effective computational tool for comparing two discrete probability distributions, has recently attracted much attention and played essential roles in many modern applications. This paper proposes to solve the discrete OT problem by applying a squared smoothing Newton method via the Huber smoothing function for solving the corresponding KKT system directly. The proposed algorithm admits appealing convergence properties and is able to take advantage of the solution sparsity to greatly reduce computational costs. Moreover, the algorithm can be extended to solve problems with similar structures, including the Wasserstein barycenter (WB) problem with fixed supports. To verify the practical performance of the proposed method, we conduct extensive numerical experiments to solve a large set of discrete OT and WB benchmark problems. Our numerical results show that the proposed method is efficient compared to state-of-the-art linear programming (LP) solvers. Moreover, the proposed method consumes less memory than existing LP solvers, which demonstrates the potential usage of our algorithm for solving large-scale OT and WB problems.    
\end{abstract}

\textbf{Keywords.} Optimal transport, Wasserstein barycenter, linear programming, smoothing Newton method, Huber function

\section{Introduction} \label{sec:intro}
We are interested in the discrete optimal transport (OT) problem that takes the form of
\begin{equation}
    \label{eq:POT}
    \min_{X\in \mathbb{R}^{m\times n}}\quad \left\langle C, X \right\rangle \quad \mathrm{s.t.}\quad Xe_n = a, \; X^Te_m = b,\; X\geq 0, 
\end{equation}
where $C\in \mathbb{R}^{m\times n}_+ $ denotes a certain cost matrix, $a \in \mathbb{R}^m_+$ and $b \in \mathbb{R}^n_+$ denote two discrete probability distributions satisfying $e_m^T a =  e_n^T b = 1$, and $ e_m $ (resp. $ e_n $) denotes the vector of all ones in $ \mathbb{R}^m $ (resp. $ \mathbb{R}^n $). By the standard Lagrangian duality theory, the dual problem of \eqref{eq:POT} is given as the following maximization problem:
\begin{equation}\label{eq:DOT}
	\max_{f\in\mathbb{R}^m, g\in \mathbb{R}^n, Z\in\mathbb{R}^{m\times n}}\quad \left\langle a, f\right\rangle  + \left\langle b, g\right\rangle \quad \mathrm{s.t.}\quad fe_n^T + e_mg^T + Z = C,\; Z\geq 0. 
\end{equation}
Obviously, the feasible set for  problem \eqref{eq:POT} is nonempty and compact. Hence, the primal problem \eqref{eq:POT} has a finite optimal value that is attainable. Since the strong duality for linear programming always holds, it is clear that the following KKT optimality conditions for \eqref{eq:POT}--\eqref{eq:DOT}:
\begin{equation}\label{eq:KKT}
	Xe_n = a, \quad  X^Te_m = b,\quad fe_n^T + e_mg^T + Z = C,\quad X\geq 0,\; Z\geq 0,\; \left\langle X, Z \right\rangle = 0
\end{equation}
admits at least one solution. Note that the last three conditions in \eqref{eq:KKT} can be reformulated as the nonsmooth system $ X - \Pi_+(X-\sigma Z) = 0 $, where $\Pi_+(\cdot)$ is the projection operation onto the nonnegative orthant $ \mathbb{R}^{m\times n}_+ $ and $\sigma > 0$ is any positive constant.

The OT problem has a large number of important applications, due partly to the essential metric property of its optimal solution. In particular, the optimal objective value of the OT problem defines a distance (i.e., the Wasserstein distance) between two probability distributions if the cost matrix $C$ is chosen based on some distances; see for instance \cite{fuDetectingPhishingWeb2006, graumanFastContourMatching2004, kendalQuantifyingPlantColour2013,   peyre2019computational, rubnerEarthMoverDistance2000, santambrogio2015optimal, villani2009optimal} and references therein for fundamental properties of OT problems together with many important applications in a wide variety of fields including imaging science, engineering, statistics and machine learning, management, finance, and beyond. {For example, a representative} {application of the Wasserstein distance is the Wasserstein barycenter {(WB) problem}, which aims to compute the mean of a set of discrete probability distributions under the Wasserstein distance \cite{agueh2011barycenters}. It has shown promising performance in many fields such as machine learning \cite{cuturi2014fast, peyre2019computational, ye2014scaling}, image processing \cite{rabin2012wasserstein}, and economics \cite{chiappori2010hedonic, galichon2018optimal}. For a set of discrete probability distributions with finite support points, a WB problem with its support points pre-specified can be reformulated as a {linear programming (LP)} problem \cite{anderes2016discrete} {that shares a similar structure as the OT problem; See section \ref{sec:wbp} for the detailed description of the WB problem with fixed supports}.}

As real-world applications typically produce large-scale OT problems, computing the optimal transportation plans efficiently and accurately at scale has received growing attention. Indeed, the last few years have seen a blossoming of interest in developing efficient solution methods for solving OT problems. First, if we treat the OT problem as a special case of a general {LP} problem, it is clear that any solver for LP can be applied. In particular, it is commonly accepted that~\footnote{Based on the benchmark conducted by Hans D. Mittelmann, see \url{http://plato.asu.edu/}.} primal-dual interior point methods (IPMs) \cite{wright1997primal} and simplex methods \cite{dantzig2016linear}, such as the highly optimized commercial solvers GUROBI, MOSEK and CPLEX, and the open-source solver HiGHS, are the most efficient and robust solvers for general LPs. Moreover, when an LP problem (including the OT problem) has many more variables than the number of linear constraints, \cite{li2020asymptotically} provides strong evidence that the dual-based inexact augmented Lagrangian method is also highly efficient and consumes less memory than that of IPMs. On the other hand, it is known from the literature (see e.g., \cite[Chapter 3]{peyre2019computational}) that the OT problem can be viewed as an optimal network flow problem built upon a bipartite graph having $(m+n)$ nodes for which $a$ and $b$ are considered as the sources and sinks, respectively. Then, the network simplex method \cite{orlin1997polynomial} can also be applied to solve the OT problem exactly. In fact, as demonstrated in \cite{schrieber2016dotmark}, the network simplex method (implemented in CPLEX) and its variant (such as the transportation simplex method \cite{luenberger1984linear}) are very efficient and robust. Last, for applications for which accurate solutions are not required, entropic regularized algorithms (see e.g., the Sinkhorn algorithm \cite{cuturi2013sinkhorn} and its stabilized variants \cite{schmitzer2019stabilized}), entropic and/or Bregman proximal point methods (see e.g., \cite{chu2020efficient, yang2022bregman}) and multiscale approaches \cite{liu2022multiscale,merigot2011multiscale, schmitzer2016sparse} are applicable and popular.

When it comes to the WB problem, the problem scale is usually much larger than that of the OT problem due to the {need to compute multiple transport plans for the given} discrete distributions. Thus, it is quite challenging to solve the {WB} problems by classical methods like IPMs or simplex methods. To overcome the computational challenges, one promising approach is to consider the entropic regularized problem \cite{benamou2015iterative, cuturi2014fast}, for which scalable Sinkhorn based algorithms can be developed for solving large-scale problems (see e.g., \cite{yang2021fast, ye2017fast, zhang2022efficient}). However, the entropic regularized problem typically can only produce a rough approximate solution to the original WB problem. To get better approximate solutions, it is necessary to choose a small regularization parameter, but this will generally cause some numerical instabilities and also inefficiency to the Sinkhorn based algorithms. Thus, in this paper, we aim to solve the OT and WB problems directly without introducing an entropic regularization.

In this paper, we focus on solving the KKT system \eqref{eq:KKT} directly by Newton-type methods. We know that the OT problem \eqref{eq:POT} has many more variables than the number of linear constraints. As a consequence, an optimal solution (not necessarily unique) of an OT problem is generally highly sparse. In fact, as an optimal network flow problem, the OT problem admits an optimal solution having at most $m+n$ nonzero entries (see e.g., \cite[Section 3.4]{peyre2019computational}). It is well-known that a primal-dual IPM applies the Newton method for solving a sequence of perturbed KKT systems. This approach is shown to be highly efficient and robust in general. However, as we will explain in section \ref{sec:preliminary}, the primal-dual IPM needs to solve a sequence of dense and highly ill-conditioned  linear systems which require excessive computational costs. Moreover, as the iterates are required to stay in the positive orthant, IPM is incapable of exploiting the solution sparsity when solving the OT problem. 

The main issue that we want to address in this paper is: How can we exploit the solution sparsity in designing a Newton-type method for solving the KKT system \eqref{eq:KKT} directly? One popular way is to {combine} an IPM with the column generation technique; See e.g., \cite{ye1992potential, tits2006constraint} for more details. However, the column generation technique is effective only if one is able to identify the correct active set efficiently, which requires sophisticated pricing strategies that are usually highly heuristic. Another approach is to apply the semismooth Newton method \cite{qi1993nonsmooth} to solve the original KKT system directly. It is shown that, under suitable conditions, the semismooth Newton (SSN) method has a local superlinear or even quadratic convergence rate. However, there exist some practical and theoretical issues when applying the SSN method, as we shall explain later in section \ref{sec:preliminary}. Among these issues, the most critical one is that the global convergence of SSN cannot be guaranteed due to the lack of a suitable merit function for line search. 

To address the question just raised, we propose a squared smoothing Newton method via the Huber function for solving OT problems. The proposed method admits appealing global convergence properties, and it is able to take advantage of the underlying sparsity structure of the optimal solution. In addition, the proposed method enjoys superlinear convergence to an accumulation point. Extensive numerical experiments conducted on solving OT problems generated from the DOTmark collection \cite{schrieber2016dotmark} show the promising practical performance of our proposed method when compared to highly optimized LP solvers and the network simplex method. Besides the successful application to OT problems, we further extend the method to solve WB problems with fixed supports, and provide an efficient way to solve the corresponding Newton system. Numerical results on some real image dataset indicate that the proposed method outperforms both the primal-dual IPM and dual simplex method implemented in the highly optimized commercial solver Gurobi.

The rest of this paper is organized as follows. In section \ref{sec:preliminary}, to motivate the development of our proposed algorithm, we briefly introduce the main ideas of the primal-dual IPM and the semismooth Newton method for solving OT problems, and explain some of the limitations of both methods. Then in section \ref{sec:sqsn}, we present our main algorithm, i.e., a squared smoothing Newton method via the Huber smoothing function, and conduct rigorous convergence analysis. In section \ref{sec:wbp}, we extend our algorithm to solve WB problems with fixed supports, and explain how to solve the underlying Newton equations efficiently. To evaluate the practical performance of the proposed method, we conduct numerical experiments on numerous OT and WB problems with different sizes in section \ref{sec:numexp}. Finally, we conclude our paper in section \ref{sec:conclusions}.

\section{Preliminary} \label{sec:preliminary}

In this article, the following notations are used. For any finite dimensional Euclidean space $\mathbb{E}$  equipped with an inner product $\left\langle \cdot, \cdot\right\rangle$, we denote its induced norm by $\left\lVert\cdot\right\rVert$. {We denote the nonnegative orthant of $\mathbb{R}^n$ as $\mathbb{R}^n_+$.} For any positive integer $n$, $e_n\in \mathbb{R}^n$ denotes the vector of all ones and $I_n\in \mathbb{R}^{n\times n}$ denotes the identity matrix. Let $X\in \mathbb{R}^{m\times n}$, we use $\mathrm{vec}(X)$ to denote the vector in $ \mathbb{R}^{mn} $ obtained by stacking the columns of $X$. Conversely, $X = \mathrm{Mat}(x)$ is the unique matrix such that $x = \mathrm{vec}(X)$. For given matrices $A$ and $B$, $A\otimes B$ denotes the Kronecker product of $A$ and $B$. Additionally, for any matrix $X$ such that $AXB$ is well-defined, it holds that $\mathrm{vec}(AXB) = (B^T\otimes A)\mathrm{vec}(X)$. We also use $\circ$ to denote the element-wise multiplication operator of two vectors/matrices of the same size. Let $x$ be any vector, $\mathrm{Diag}(x)$ denotes the diagonal matrix with $x$ on its diagonal. Conversely, for any square matrix $X$, $\mathrm{diag}(X)$ is the vector consisting of the diagonal entries of $X$. For any vector $x$, the projection of $x$ onto $\mathbb{R}^n_+$ is denoted by $\Pi_+(x) := \mathrm{argmin}\{\left\lVert z - x\right\rVert^2\;:\; z\geq 0\}=\max\{x,0\}$, where the $\max$ operation is applied elementwise on $x$.

In this section, we shall briefly discuss the primal-dual IPM and the semismooth Newton (SSN) method, which are both Newton-type methods for handling the KKT system \eqref{eq:KKT}. We also mention some theoretical and practical issues that we may face when applying these two methods to solving the OT problem. 

We start by introducing some notation that would simplify our presentation. Let $x:= \mathrm{vec}(X) \in \mathbb{R}^{mn}$. By using the properties of the Kronecker product, we see that 
\[
	Xe_n = (e_n^T\otimes I_m)x,\quad X^Te_m = (I_n\otimes e_m^T)x.
\]
Denote $z:= \mathrm{vec}(Z) \in \mathbb{R}^{mn}$, $ c:= \mathrm{vec}(C) \in \mathbb{R}^{mn} $, $ d:= (a;b) \in \mathbb{R}^{m+n} $, $ y:= (f;g) \in \mathbb{R}^{m+n} $ and $ A:= \begin{pmatrix}
	e_n^T\otimes I_m \\
	I_n\otimes e_m^T 
\end{pmatrix} $. Then we can rewrite the KKT systems \eqref{eq:KKT} as follows:
\begin{equation}\label{eq:kkt-new}
	Ax = d, \quad A^Ty + z = c,\quad x\circ  z = 0,\; x\geq 0,\; z\geq 0. 
\end{equation}

%%%%%%%%%%%%%%%%%%%%%%%%%%
\subsection{Primal-dual interior point methods} \label{subsec:IPM}

Recall that a primal-dual IPM solves the KKT system \eqref{eq:KKT} {indirectly} by solving a sequence of perturbed KKT systems of the form:
\begin{equation}\label{eq:perb-kkt}
	Ax = d, \quad A^Ty + z = c,\quad x\circ  z = \mu e_{mn},\; x\geq 0,\; z\geq 0,
\end{equation}
via the Newton method, where $\mu > 0$ is the barrier parameter that is driven to zero. At each IPM iteration, the following Newton equation is solved:
\begin{equation}
	\label{eq:IPM-linsys}
	\begin{pmatrix}
		A & 0 & 0 \\
		0 & A^T & I_{mn} \\
		\mathrm{Diag}(z) & 0 & \mathrm{Diag}(x)
	\end{pmatrix}
\begin{pmatrix}
	\Delta x \\ \Delta y \\ \Delta z
\end{pmatrix}
=
\begin{pmatrix}
	r_p := d - Ax \\
	r_d := c - A^Ty - z \\
	r_c := \gamma \mu e_{mn} - x\circ z
\end{pmatrix},
\end{equation}
where $(x, y, z) \in \mathbb{R}^{mn}_{++}\times \mathbb{R}^{m+n} \times \mathbb{R}^{m n}_{++}$ is the current primal-dual iterate, $ (\Delta x, \Delta y, \Delta z) \in \mathbb{R}^{m n}\times \mathbb{R}^{m+n} \times \mathbb{R}^{m n}$ is the Newton direction, $\mu$ is set to be $\langle x,z\rangle/(mn)$, and $\gamma \in [0,1]$ is a given parameter. To reduce the computational cost, the large-scale system of $2mn+m+n$ linear equations is typically reduced to the so-called normal system of $m+n$ equations in $\Delta y$ through the block elimination of $\Delta x$ and $\Delta z$. Specifically, we solve the normal equation:
\begin{equation}
	\label{eq:normal-system-ipm}
	A\Theta A^T \Delta y = r_p + A\Theta (r_d - \mathrm{Diag}(x)^{-1}r_c),
\end{equation}
where $\Theta:=\mathrm{Diag}(\theta)\in \mathbb{R}^{mn\times mn}$ is a diagonal matrix whose diagonal entries are $\theta_i = x_i/z_i$, for $i = 1,\dots, mn$. After obtaining $\Delta y$ by solving \eqref{eq:normal-system-ipm}, we can compute $\Delta x$ and $\Delta z$ accordingly. To solve \eqref{eq:normal-system-ipm} efficiently, the following structure of $A\Theta A^T$ is useful.

\begin{fact}\label{fact-ADAT}
	For any diagonal matrix $\Theta := \mathrm{Diag}(\theta)\in \mathbb{R}^{mn\times mn}$, let $V:= \mathrm{Mat}(\theta) \in \mathbb{R}^{m\times n}$, then it holds that 
	\[
	A\Theta A^T = \begin{pmatrix}
		\mathrm{Diag}(Ve_n) & V \\
		V^T & \mathrm{Diag}(V^Te_m)
	\end{pmatrix}.
	\]
\end{fact}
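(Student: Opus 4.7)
The claim is a purely computational identity about Kronecker products, which I would verify by computing the four blocks of $A\Theta A^T$ directly. Splitting $A = \begin{pmatrix} A_1 \\ A_2 \end{pmatrix}$ with $A_1 = e_n^T\otimes I_m$ and $A_2 = I_n\otimes e_m^T$, the product $A\Theta A^T$ has the natural $2\times 2$ block decomposition with blocks $A_i \Theta A_j^T$, and the goal is to show these equal $\mathrm{Diag}(Ve_n)$, $V$, $V^T$, and $\mathrm{Diag}(V^T e_m)$, respectively.

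The key structural observation is that $\Theta = \mathrm{Diag}(\mathrm{vec}(V))$ is block-diagonal once partitioned into $n$ consecutive $m\times m$ blocks: the $j$-th diagonal block equals $\mathrm{Diag}(v_j)$, where $v_j$ denotes the $j$-th column of $V$. Simultaneously, $A_1 = e_n^T \otimes I_m$ is the horizontal concatenation $[\,I_m\;\cdots\;I_m\,]$ of $n$ copies of $I_m$, while $A_2 = I_n \otimes e_m^T$ is block-diagonal with $n$ copies of $e_m^T$ on the diagonal. With these descriptions in hand, each of the four block computations becomes a short calculation: for instance, $A_1 \Theta A_1^T = \sum_{j=1}^n \mathrm{Diag}(v_j) = \mathrm{Diag}(Ve_n)$, and $A_1 \Theta A_2^T$ extracts the columns $\mathrm{Diag}(v_j) e_m = v_j$, which assemble into $V$. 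The remaining blocks follow by the analogous block-diagonal bookkeeping, and the $(2,1)$ block is the transpose of the $(1,2)$ block by symmetry of $A\Theta A^T$.

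There is no genuine obstacle here; the proof is really a matter of lining up partitioning conventions correctly. As a sanity check, I would alternatively verify the identity functionally: for arbitrary $u \in \mathbb{R}^m$ and $v \in \mathbb{R}^n$, the composition $A\Theta A^T (u;v)$ expands via $A^T(u;v) = \mathrm{vec}(u e_n^T + e_m v^T)$, followed by the Hadamard action $\Theta\,\mathrm{vec}(M) = \mathrm{vec}(V \circ M)$, followed by $A$ reading off row and column sums. Using the elementary identities $V \circ (u e_n^T) = \mathrm{Diag}(u) V$ and $V \circ (e_m v^T) = V\,\mathrm{Diag}(v)$, one directly recovers $\mathrm{Diag}(Ve_n)u + Vv$ in the top block and $V^T u + \mathrm{Diag}(V^T e_m) v$ in the bottom block, which is exactly the action of the claimed block operator on $(u;v)$.
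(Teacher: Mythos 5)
Your proof is correct. The paper states this as a \emph{Fact} without proof, so there is nothing to compare against; both of your verifications (the block-by-block computation via the partitions $A_1 = e_n^T\otimes I_m = [I_m\;\cdots\;I_m]$, $A_2 = I_n\otimes e_m^T$, and the functional check on an arbitrary $(u;v)$ using $\Theta\,\mathrm{vec}(M)=\mathrm{vec}(V\circ M)$) are accurate and complete, and either one alone suffices.
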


As we can see, the iterates $x$ and $z$ generated by an IPM should be positive because of the third equation in \eqref{eq:perb-kkt}. As a consequence, the matrix $V = \mathrm{Mat}(\theta)$ is dense and $A\Theta A^T$ contains two fully dense blocks. Therefore, factorizing $ A\Theta  A^T$ in \eqref{eq:normal-system-ipm} can be very expensive when $m+n$ is large. As a result, solving the normal system by an iterative solver is the only viable option. Moreover, it is obvious that $ A\Theta A^T$ is always singular, since the vector $(e_m; -e_n) \in \mathbb{R}^{m+n}$ is an eigenvector for $A\Theta  A^T$ corresponding to the zero eigenvalue. Note that the singularity of the coefficient matrix may not be an issue since one can remove the last row of $ A$ in a prepossessing phase, which results in a normal equation having a coefficient matrix of the form 
\[
\begin{pmatrix}
	\mathrm{Diag}(Ve_n) & \hat V \\
	\hat V^T & \mathrm{Diag}(\hat V^Te_m)
\end{pmatrix} \in \mathbb{R}^{(m+n-1)\times(m+n-1)} ,
\]
where $\hat V \in \mathbb{R}^{m\times (n-1)}$ is formed from the first $n-1$ columns of $V$. Then, by \cite[Lemma 2.3]{hu2021semismooth}, we see that the resulting coefficient matrix is nonsingular. However, the issue of ill-conditioning could still be a critical 
difficulty that goes against an iterative solver for fast convergence. In view of the above discussions, IPMs may not perform as efficiently as one would expect when solving large-scale OT problems. 

\subsection{The semismooth Newton method} \label{subsec:ssn}
In this subsection, we shall discuss the SSN method. We first revisit the concept of semismoothness which was originally introduced in \cite{mifflin1977semismooth} for functionals and later extended for vector-valued functions in \cite{qi1993nonsmooth}. Let $\mathbb{X}$ and $\mathbb{Y}$ be two finite dimensional Euclidean spaces and $F:\mathbb{X}\rightarrow \mathbb{Y}$ be a locally Lipschitz continuous function. According to Rademacher's Theorem \cite{rademacher1919partielle}, $F$ is F-differentiable almost everywhere, and the Clarke's generalized Jacobian \cite{clarke1990optimization} of $F$ at $x\in \mathbb{X}$, denoted by $\partial F(x)$, is well-defined and it holds that
\[
\partial F(x) := \mathrm{conv}(\partial_BF(x)), \quad \partial_BF(x):= \left\{ \lim_{D_F \ni z \rightarrow x} F'(z) \right\},\quad  \forall\; x\in \mathbb{X},
\]
where ``$\mathrm{conv}$'' denotes the convex hull operation and $D_F \subset \mathbb{X}$ is the set of points at which $F$ is F-differentiable. 

If $F$ is directionally differentiable at $x\in \mathbb{X}$, i.e., the limit
\[
F'(x;d) := \lim_{t\downarrow 0} \frac{F(x + td) - F(x)}{t} 
\]  
exists for all $d\in \mathbb{X}$, and 
$
\left\lVert F(x + d) - F(x) - V d\right\rVert = o(\left\lVert d\right\rVert),\; \forall V\in \partial F(x + d),\; \forall d\rightarrow 0,
$
we say that $F$ is semismooth at $x$. If $F$ is semismooth at $ x\in \mathbb{X} $ and it holds that 
\[
\left\lVert F(x + d) - F(x) - V d\right\rVert = O(\left\lVert d\right\rVert^2),\quad \forall V \in \partial F(x + d),\; \forall d\rightarrow 0,
\]
then $F$ is said to be strongly semismooth at $x$. Moreover, $F$ is said to be (strongly) semismooth everywhere on $\mathbb{X}$, if $F$ is (strongly) semismooth at any point $x\in \mathbb{X}$. 

Convex functions, smooth functions, and piecewise linear functions are examples of semismooth functions. In particular, one can verify that the projection operator onto the nonnegative orthant is strongly semismooth everywhere. Hence, the  KKT mapping $F(x, y, z)$ of the following equation
is also strongly semismooth everywhere:
\begin{equation}\label{eq-kkt-mapping}
 0\;=\;	F(x, y, z):= 
	\begin{pmatrix}
		Ax - d \\
		-A^Ty - z + c \\
		x - \Pi_+(x - \sigma z)
	\end{pmatrix},\quad (x, y, z)\in \mathbb{R}^{mn}\times \mathbb{R}^{m+n} \times \mathbb{R}^{mn},
\end{equation}
where $ \sigma >0 $ is a given constant. Therefore, a natural idea is to apply the semismooth Newton (SSN) method \cite{qi1993nonsmooth}, i.e., a nonsmooth version of Newton method, for solving \eqref{eq-kkt-mapping}. In particular, at each iteration of the SSN method, the following linear system is solved
\begin{equation}
	\label{eq:ssn-system}
	\begin{pmatrix}
		A & 0 & 0 \\
		0 & -A^T & -I_{mn} \\
		I_{mn} - V & 0 & \sigma V
	\end{pmatrix}
	\begin{pmatrix}
		\Delta x \\ \Delta y \\ \Delta z
	\end{pmatrix} = \begin{pmatrix}
		r_p:= d - Ax \\ r_d:=A^Ty + z - c \\ r_c:= \Pi_+(x - \sigma z) - x
	\end{pmatrix},
\end{equation}
where $(x, y, z)$ denotes the current iterate and $V = \mathrm{Diag}(v) \in \mathbb{R}^{mn\times mn}$ is an element in the Clarke's generalized Jacobian (see \cite{clarke1990optimization}) of $\Pi_+(x - \sigma z)$. Specifically, for given $(x, z)$, we may choose 
\[
v_{i} = \left\{
\begin{array}{ll}
	 1, & \textrm{if }\; (x -\sigma z)_i \geq 0, \\
	%\in [0,1], & \textrm{if }\; (x -\sigma z)_i = 0, \\
	0, & \textrm{otherwise},
\end{array}
\right.  \quad i= 1,\dots, mn. 
\]
By the expression of $V$, we see that when $(x, z)$ are nearly optimal, $V$ could have a large number of zeros diagonal elements. Thus, the sparsity structure of the optimal solution can be utilized in the SSN method, which is an attractive feature when compared with the IPM. However, there are two issues that  prevent us from applying the SSN method directly:
\begin{enumerate}
	\item The mapping $F$ is nonsmooth due to the nonsmoothness of $\Pi_+$. Hence, it is difficult to find a valid merit function for which a line search scheme {for the SSN} is well-defined. As a consequence, the globalization of the SSN method is a challenging difficulty that has yet to be resolved. 
	\item It is not clear how to reduce the large-scale linear system \eqref{eq:ssn-system}, which is typically singular,  to a certain normal linear system to save computational cost as in the case of the interior point method.
\end{enumerate}

%%%%%%%%%%%%%%%%%%%%%%%
\section{A Squared Smoothing Newton Method} \label{sec:sqsn}

Note that the KKT equation \eqref{eq-kkt-mapping} can be reduced to {finding the root of the following mapping $E(x,y)$:}
\begin{equation}\label{eq-new-kkt-mapping}
	0 \;=\; E(x,y):= \begin{pmatrix}
		Ax-d \\ 
		x - \Pi_+(x + \sigma (A^Ty - c))
	\end{pmatrix},\quad \;(x, y) \in \mathbb{R}^{mn}\times \mathbb{R}^{m+n},
\end{equation}
by eliminating the variable $ z = c - A^Ty $. In this section, we focus on presenting our main algorithm, the squared smoothing Newton method via the Huber smoothing function, for solving the nonsmooth system \eqref{eq-new-kkt-mapping}. Existing smoothing Newton methods have been developed and studied extensively and were mainly applied for solving conic programming problems, complementarity problems, and variational inequalities. We refer the readers to \cite{ chen1998global, kong2008regularized, qi2000new, sun2004squared} for more details. 

\subsection{The Huber smoothing function}\label{subsec:huber}

The key idea of a smoothing method  \cite{sun2001solving} is to construct a smoothing approximation function $ \mathcal{E}:\mathbb{R}_{++}\times \mathbb{R}^{mn}\times \mathbb{R}^{m+n} \rightarrow  \mathbb{R}^{mn}\times \mathbb{R}^{m+n}$ for the mapping $ E$, and then apply the Newton method for solving the smoothed system $ \mathcal{E}(\epsilon, x, y) = 0$, for $ (\epsilon, x, y)\in \mathbb{R}\times \mathbb{R}^{mn} \times \mathbb{R}^{m+n} $. To this end, it is essential to construct a smoothing approximation of the projection operator $ \Pi_+(\cdot) $, namely, $ \Phi:\mathbb{R}_{++}\times \mathbb{R}^{mn} \rightarrow \mathbb{R}^{mn} $, such that for any $ \epsilon > 0 $, $ \Phi(\epsilon,\cdot) $ is continuously differentiable on $ \mathbb{R}^{mn} $ and for any $ x\in \mathbb{R}^{mn} $, it holds that $ \left\lVert \Phi(\epsilon, x) - \Pi_+(x) \right\rVert \rightarrow 0 $ as $ \epsilon \downarrow 0 $. 

Note that a smoothing approximation function of $ \Pi_+(\cdot) $ strongly depends on the smoothing function for the plus function 
\[
	\pi(t) = \max\{0, t\} , \quad \forall\; t\in \mathbb{R}.
\]
To the best of our knowledge, the Chen-Harker-Kanzow-Smale (CHKS) smoothing function \cite{chen1993non, kanzow1996some, smale2000algorithms}, defined as
\[
	\xi(\epsilon, t):= \frac{1}{2}\left(\sqrt{t^2+4\epsilon^2} + t\right),\quad \forall \; (\epsilon, t)\in \mathbb{R}\times \mathbb{R},
\] 
is the most commonly used smoothing function for $ \pi(\cdot) $ in the literature. Readers are referred to \cite{qi2000new} for other smoothing functions, whose properties are also well studied. It is easy to see that $ \xi(\epsilon, t) $ maps any negative number $ t $ to a positive value when $ \epsilon\neq 0 $, while the function $ \pi(\cdot) $ maps any negative number $ t $ to zero. Hence, the CHKS smoothing function does not preserve the sparsity structure of $\pi(\cdot)$. As a result, a smoothing Newton method developed based on the CHKS smoothing function is not able to exploit the sparsity in the solutions of the OT problem to reduce the cost of solving the linear system at each  iteration of the algorithm.

To resolve the issue just mentioned, we propose to use the Huber smoothing function (a translation of the function considered in \cite{zang1980smoothing}) which also maps any negative number to zero. In particular, the Huber smoothing function we shall use in this paper is defined as:
\begin{equation}\label{eq-huber-function}
	h(\epsilon, t):= \left\{ 
		\begin{array}{ll}
			t - \frac{\lvert \epsilon \rvert}{2}, & t \geq  \lvert \epsilon \rvert, \\
			\frac{t^2}{2\lvert \epsilon \rvert}, & 0 < t < \lvert \epsilon \rvert, \\
			0, & t \leq 0,
		\end{array}
	\right. \quad \forall \;(\epsilon, t)\in \mathbb{R}\backslash \{0\} \times \mathbb{R},\quad h(0,t) = \pi(t),\quad \forall  t\in \mathbb{R}.
\end{equation}

Clearly, $ h(\epsilon, t) $ is continuously differentiable for $ \epsilon \neq 0 $ and $ t\in \mathbb{R} $. In fact, we see that for $ \epsilon\neq 0 $,
\[
	h_1'(\epsilon, t) := \frac{\partial h}{\partial \epsilon}(\epsilon,t) = \left\{ 
	\begin{array}{ll}
		- \frac{1}{2}\mathrm{sign}(\epsilon), & t \geq  \lvert \epsilon \rvert, \\
		-\frac{t^2}{2\epsilon^2}\mathrm{sign}(\epsilon), & 0 < t < \lvert \epsilon \rvert, \\
		0, & t \leq 0,
	\end{array}
	\right.\quad h_2'(\epsilon, t) := \frac{\partial h}{\partial t}(\epsilon,t) = \left\{ 
	\begin{array}{ll}
		1, & t \geq  \lvert \epsilon \rvert, \\
		\frac{t}{\lvert \epsilon \rvert}, & 0 < t < \lvert \epsilon \rvert, \\
		0, & t \leq 0,
	\end{array}
	\right.. %\quad \forall t\in \mathbb{R}.
\]
Moreover, it is easy to check that for any $ t\in \mathbb{R} $,
\begin{equation*}
	\begin{aligned}
		\partial_Bh(0, t) = &\; \left\{v \mid v = \lim_{ {\epsilon^k\rightarrow 0}, t^k\rightarrow t} h'(\epsilon^k, t^k),\; \textrm{if exists}\right\} = \left\{
			\begin{array}{ll}
				\left(\pm \frac{1}{2}, 1\right), & t > 0, \\[3pt]
				\left\{\left(\pm\frac{1}{2}\xi^2, \xi\right)\mid \xi\in[0,1] \right\}, & t = 0, \\[3pt]
				(0, 0), & t < 0, 
			\end{array}
		\right. 
	\end{aligned}
\end{equation*}
and consequently, 
\begin{equation} \label{eq-partial-huber}
		 \partial h(0, t) = \mathrm{conv}(\partial_B h(0, t))
		= 
	\left\{
			\begin{array}{ll}
				T_+, & t > 0 \\
				T_0, & t = 0, \\
				\{(0, 0)\}, & t < 0, 
			\end{array}
		\right. 
\end{equation}
where 
\[
	\begin{aligned}
		T_+:= \;  \left\{(\nu, 1)\mid \nu \in \left[-\frac{1}{2}, \frac{1}{2}\right]\right\}, \quad
		{T_0 := \; \left\{\left(\nu, \xi \right)\mid \xi\in \left[0,1\right], \nu\in 
		\left[-\frac{1}{2}\xi,\frac{1}{2}\xi\right] \right\} .}
	\end{aligned}
\]
Recall that the generalized Jacobian of the plus function $ \pi $ has the following form
\[
	\partial \pi(t) =
	\left\{
		\begin{array}{ll}
			\{1\}, & t > 0, \\
			\left[0,1\right], & t = 0, \\
			\{0\}, & t < 0,
		\end{array}
	\right.\quad \forall t \in \mathbb{R}.
\]
Therefore, we can see that for any $ v\in \partial h(0,t) $, there exists a $u\in \partial \pi(t)$ such that $ v(0, h) \equiv u(h) $ for any $h\in \mathbb{R}^{mn}$. {One can also verify that the converse statement is true.}

\subsection{The main algorithm} \label{subsec:mainalg}
Using the Huber function $ h(\cdot,\cdot) $, we can construct the smoothing approximation for the projection operator $ \Pi_+(\cdot) $ as
\begin{equation}\label{eq-huber-Pi}
	\Phi(\epsilon, x) := (h(\epsilon, x_1),\dots, h(\epsilon, x_{mn}))^T,\quad \forall x\in \mathbb{R}^{mn},\; \epsilon \neq 0. 
\end{equation}
Given the above smoothing function $ \Phi(\cdot) $ for $ \Pi_+(\cdot) $, a smoothing approximation of $ E(\cdot,\cdot) $ can be constructed accordingly as follows:
\begin{equation}\label{eq-smoothing-E}
	\mathcal{E}(\epsilon,x,y):= 
	\begin{pmatrix}
		Ax + \kappa_p \epsilon y - d \\ (1+ \kappa_c \epsilon) x - \Phi(\epsilon, x + \sigma (A^Ty - c)) 
	\end{pmatrix},\quad \forall (\epsilon, x, y )\in \mathbb{R}_{++} \times \mathbb{R}^{mn} \times \mathbb{R}^{m+n},
\end{equation}
where $ \kappa_p > 0 $ and $ \kappa_c > 0 $ are two given parameters. Note that adding the perturbation terms $ \kappa_p\epsilon y $ and $ \kappa_c\epsilon x $ is essential in our algorithmic design, since these perturbations will ensure that the Jacobian of $ \mathcal{E} $ at $ (\epsilon,x,y) $ is nonsingular when $ \epsilon > 0 $. 

In the current literature, smoothing Newton methods can be roughly divided into two groups, the Jacobian smoothing Newton methods and the squared smoothing Newton methods (see \cite{qi1999survey} for a comprehensive survey). The global convergence of a Jacobian smoothing Newton method strongly relies on the so-called Jacobian consistency property and the existence of a positive constant $ \kappa  $ such that 
\begin{equation}\label{eq-jacobian-bdd}
	\left\lVert \mathcal{E}(\epsilon, x, y) - E(x,y)\right\rVert \leq \kappa \epsilon,\quad \forall \epsilon > 0, \; (x, y)\in \mathbb{R}^{mn}\times \mathbb{R}^{m+n}.
\end{equation}
Unfortunately, the property \eqref{eq-jacobian-bdd} does not hold for \eqref{eq-smoothing-E}. Therefore, in our setting, the Jacobian smoothing Newton method {is not} applicable. On the other hand, the squared smoothing Newton method tries to find a solution of $ \mathcal{E}(\epsilon, x, y)=0 $  by solving the following system 
\begin{equation}\label{eq-E-hat}
	\widehat{\mathcal{E}}(\epsilon, x, y) := 
	\begin{pmatrix}
		\epsilon \\ \mathcal{E}(\epsilon, x, y)
	\end{pmatrix} = 0,\quad (\epsilon, x, y )\in \mathbb{R} \times \mathbb{R}^{mn} \times \mathbb{R}^{m+n},
\end{equation}
via the Newton method. As we shall see shortly, the global convergence of the squared smoothing Newton (SqSN) method does not rely on strong conditions such as \eqref{eq-jacobian-bdd}. This explains why we choose the SqSN method in the present paper.

Associated with the system \eqref{eq-E-hat}, we can define the merit function $ \phi: \mathbb{R} \times \mathbb{R}^{mn} \times \mathbb{R}^{m+n} \rightarrow \mathbb{R}$:
\[
	\phi(\epsilon, x, y):= \left\lVert \widehat{\mathcal{E}}(\epsilon, x, y) \right\rVert^2,\quad  (\epsilon, x, y )\in \mathbb{R} \times \mathbb{R}^{mn} \times \mathbb{R}^{m+n},
\]
to ensure that the line search procedure within the SqSN method is well-defined.  We also define an auxiliary function $ \zeta: \mathbb{R} \times \mathbb{R}^{mn} \times \mathbb{R}^{m+n} \rightarrow \mathbb{R} $:
\[
	\zeta(\epsilon, x, y) := r\min\Big\{1, \left\lVert \widehat{\mathcal{E}}(\epsilon, x, y)\right\rVert^{1+\tau}\Big\},\quad \forall (\epsilon, x, y )\in \mathbb{R} \times \mathbb{R}^{mn} \times \mathbb{R}^{m+n},
\]
which controls how the smoothing parameter $ \epsilon $ is driven to zero, where $ r\in (0,1) $ and $ \tau\in (0,1] $ are two given constants. Using these notations, we can now describe our squared smoothing Newton method via the Huber function in Algorithm \ref{alg:smoothingNewton}.

\begin{algorithm}[H]
\caption{A squared smoothing Newton (SqSN) method via the Huber function}\label{alg:smoothingNewton}
\begin{algorithmic}[1]
\Require Initial point $ (x^0, y^0)\in \mathbb{R}^{mn}\times \mathbb{R}^{m+n} $, $ \epsilon^0 > 0 $,  $ r\in (0,1) $ such that $ \delta:=r\epsilon^0 < 1$, $ \tau\in (0,1] $, $ \rho\in (0,1) $, and $ \mu\in (0,1/2) $.
\For{$ k\geq 0 $}
	\If{$ \widehat{\mathcal{E}}(\epsilon^k, x^k, y^k) = 0 $}
		\State Output: $ (\epsilon^k, x^k, y^k) $;	
	\Else
	\State Compute $ \zeta_k = \zeta(\epsilon^k, x^k, y^k) $;
	\State Find $ \Delta^k := (\Delta \epsilon^k; \Delta x^k; \Delta y^k) $ via solving the following linear system of equations  
	\begin{equation}\label{eq-alg-smoothingNewton-1}
		\widehat{\mathcal{E}}(\epsilon^k, x^k, y^k) + \widehat{\mathcal{E}}'(\epsilon^k, x^k, y^k)\Delta^k = (\zeta_k\epsilon^0; 0; 0);
	\end{equation}
	\State Compute $ \ell_k $ as the smallest nonnegative integer $ \ell $ satisfying
	\[
		\phi(\epsilon^k + \rho^\ell \Delta \epsilon^k, x^k + \rho^\ell \Delta x^k, y^k + \rho^\ell \Delta y^k) \leq \left[1 - 2\mu(1-\delta)\rho^\ell\right] \phi(\epsilon^k, x^k, y^k);
	\]
	\State Update $ (\epsilon^{k+1}, x^{k+1}, y^{k+1}) = (\epsilon^k + \rho^{\ell_k} \Delta \epsilon^k, x^k + \rho^{\ell_k} \Delta x^k, y^k + \rho^{\ell_k} \Delta y^k) $;
	\EndIf 
\EndFor 
\end{algorithmic}
\end{algorithm}

Before establishing the convergence properties of Algorithm \ref{alg:smoothingNewton}, let us first see how to solve the linear system \eqref{eq-alg-smoothingNewton-1} and also explain the potential advantage of the SqSN method compared to the IPM and the SSN method. Suppose $ \epsilon^k > 0 $, then, we see that 
	\[
		\widehat{\mathcal{E}}'(\epsilon^k, x^{k}, y^{k}) = 
		\begin{pmatrix}
			1 & 0 & 0 \\
			\kappa_py^k & A & \kappa_p\epsilon I_{m+n} \\[3pt]
			\kappa_cx^k - V_1^k & (1+\kappa_c\epsilon^k)I_{mn} - V_2^k & -\sigma V_2^kA^T
		\end{pmatrix},
	\]
	where $ V_1^k:= \Phi_1'(\epsilon^k, w^k)$ and $ V_2^k:= \Phi_2'(\epsilon^k, w^k) $ denote the partial derivatives of $ \Phi $ with respect to the first and second arguments at the referenced point {with $w^k:=x^k - \sigma(c-A^Ty^k)$,} respectively. Note also that $ \left\lvert(V_1^k)_{ii}\right \rvert \in [0, 1/2] $ and $ \left\lvert(V_2^k)_{ii}\right \rvert \in [0, 1] $ for any $ i = 1,\dots, mn $. From \eqref{eq-alg-smoothingNewton-1}, we get $ \Delta \epsilon^k = -\epsilon^k + \zeta_k\epsilon^0 $ and 
	\[
		\begin{pmatrix}
			A & \kappa_p\epsilon^kI_{m+n} \\[3pt]
			(1+\kappa_c\epsilon^k)I_{mn} - V_2^k & -\sigma V_2^kA^T
		\end{pmatrix}
		\begin{pmatrix}
			\Delta x^k \\
			\Delta y^k
		\end{pmatrix} = 
		\begin{pmatrix}
			r_p^k \\ r_c^k
		\end{pmatrix},
	\]
	where
	\[
		\begin{aligned}
			r_p^k:= &\; d - Ax^k - \kappa_p\epsilon^ky^k - \kappa_py^k\Delta \epsilon^k, \\
			r_c^k:= &\; \Phi(\epsilon^k, x^k + \sigma(A^Ty^k - c)) - (1+\kappa_c\epsilon^k)x^k - (\kappa_c x^k - V_1^k)\Delta \epsilon^k.
		\end{aligned}
	\]
	It then follows that 
	$
		\Delta x^k = \left[(1+\kappa_c\epsilon^k)I_{mn} - V_2^k\right]^{-1}\left(r_c^k + \sigma V_2^kA^T\Delta y^k\right).
	$
	Furthermore, we have that 
	\begin{equation}\label{eq-smoothing-update-y}
	    \left[\kappa_p\epsilon^k I_{m+n} + \sigma A\left((1+\kappa_c\epsilon^k)I_{mn} - V_2^k\right)^{-1}V_2^kA^T\right] \Delta y^k = r_p^k - A\left((1+\kappa_c\epsilon^k)I_{mn} - V_2^k\right)^{-1}r_c^k.
	\end{equation}
	Denote $ v^k:= \mathrm{diag}\left[ \left((1+\kappa_c\epsilon^k)I_{mn} - V_2^k\right)^{-1}V_2^k\right] \in \mathbb{R}^{mn} $ and recall that 
	$ w^k := x^k - \sigma (c-A^Ty^k ) $. It is clear that 
	\[
		v_i^k  \left\{
			\begin{array}{ll}
				>0 & \text{if } w_i^k > 0 \\
				=0 & \text{if } w_i^k \leq 0
			\end{array}
		\right., \quad \forall i = 1,\dots, mn.
	\]
	Keeping in mind that $ z^k = c - A^Ty^k $ for $ k\geq 0 $, we see that when $ (x^k, y^k) $ is close to the optimal solution pair, $ w^k $ is expected to have a large number of nonpositive elements {and hence $v^k$ to be highly sparse}. Moreover, let $ V^k:= \mathrm{mat}(v^k) \in \mathbb{R}^{m\times n} $. By Fact \ref{fact-ADAT}, we see that 
\begin{equation}
     \label{eq-ot-ADAT}
    A\left((1+\kappa_c\epsilon^k)I_{mn} - V_2^k\right)^{-1}V_2^kA^T = 
    \begin{pmatrix}
        \mathrm{Diag}(V^ke_n) & V^k \\
        (V^k)^T & \mathrm{Diag}((V^k)^Te_m)
    \end{pmatrix} \in \mathbb{S}^{m+n}.
\end{equation}	
When $ V^k $ is sparse, it is clear that the coefficient matrix for computing $ \Delta y^k $ is also sparse. This explains the appealing feature of Algorithm \ref{alg:smoothingNewton}	as  compared to the IPM, since the attractive sparsity structure of the optimal solution is exploited in Algorithm \ref{alg:smoothingNewton}. Moreover, we are always able to reduce \eqref{eq-alg-smoothingNewton-1} to the smaller-scale normal system\eqref{eq-smoothing-update-y} in Algorithm \ref{alg:smoothingNewton}, which is not the case in the SSN method. We can also see that the matrix $ A $ is not explicitly required in the implementation of Algorithm \ref{alg:smoothingNewton}, which is another advantage of the proposed method compared to an IPM. Lastly, let $J:=\diag(I_m, -I_n)$, then one can see that 
    \begin{equation}\label{eq-ot-JADAJ}
        JA\left((1+\kappa_c\epsilon^k)I_{mn} - V_2^k\right)^{-1}V_2^kA^TJ = 
        \begin{pmatrix}
            \mathrm{Diag}(V^ke_n) & -V^k \\
            -(V^k)^T & \mathrm{Diag}((V^k)^Te_m)
        \end{pmatrix},
    \end{equation}
which corresponds to the Laplacian matrix of a sparse bipartite graph defined by $V^k$ \footnote{The bipartite graph has $m$ sources and $n$ sinks. There is an edge from source $i$ to sink $j$ if and only if $V^k_{ij} > 0$ for $1\leq i \leq m$ and $1\leq j\leq n$.}. {First, one can identify all the connected components of the bipartite graph that are disjoint from each other. The matrix entries linking different components are always zero. Next, by permuting the coefficient matrix of the Newton equation, it can be converted into a block diagonal matrix. Finally, the Newton equation can be decomposed into several smaller equations. Each coefficient matrix of the smaller equation has a similar pattern as \eqref{eq-ot-JADAJ}. } In this way, the computational effort for computing the search direction may be further reduced.

%%%%%%%%%%%%%%%%%%%%%%%%
\subsection{Convergence properties of Algorithm \ref{alg:smoothingNewton}}\label{subsec:convergence}

For the rest of this section, we shall establish the convergence properties of Algorithm \ref{alg:smoothingNewton} which allow us to resolve the two difficulties of the SSN method as mentioned at the end of the last section. The tools for our analysis are adapted from those in the literature (see, e.g., \cite{sun2004squared}). 

We first show that the coefficient matrix in \eqref{eq-alg-smoothingNewton-1} is nonsingular whenever $ \epsilon^k > 0 $.
\begin{lemma}
\label{lemma-nonsingular}
$ \widehat{\mathcal{E}}'(\epsilon, x, y) $ is nonsingular for any $ (\epsilon, x, y) \in \mathbb{R}_{++}\times \mathbb{R}^{mn}\times \mathbb{R}^{m+n}  $ . 
\end{lemma}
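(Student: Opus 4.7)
The plan is to exploit the explicit block structure of $\widehat{\mathcal{E}}'(\epsilon,x,y)$ displayed in the paragraph immediately preceding the lemma, and reduce nonsingularity to a Schur complement argument that becomes transparent once the diagonal structure of $V_1$ and $V_2$ is invoked.

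First I would peel off the trivial first row. Since the top row of $\widehat{\mathcal{E}}'$ is $(1,\,0,\,0)$, the whole matrix is nonsingular if and only if the lower-right $(mn+m+n)\times(mn+m+n)$ block
\[
M(\epsilon) \;:=\; \begin{pmatrix} A & \kappa_p\epsilon I_{m+n} \\[2pt] (1+\kappa_c\epsilon) I_{mn} - V_2 & -\sigma V_2 A^T \end{pmatrix}
\]
is nonsingular (the entries $\kappa_p y$ and $\kappa_c x - V_1$ in the first column of that block do not affect invertibility since $\Delta\epsilon$ is already pinned down by the first equation). So the task reduces to showing $M(\epsilon)$ has full rank for $\epsilon>0$.

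Next I would invoke the fact, noted in the text, that $V_2$ is diagonal with $(V_2)_{ii} \in [0,1]$. Hence $(1+\kappa_c\epsilon)I_{mn} - V_2$ is a diagonal matrix with every entry bounded below by $\kappa_c\epsilon>0$; in particular it is positive definite and invertible. This licenses the block elimination of $\Delta x$ that was previewed in \eqref{eq-smoothing-update-y}, and shows that $M(\epsilon)$ is nonsingular if and only if its Schur complement with respect to the $(2,2)$-less block,
\[
S(\epsilon) \;:=\; \kappa_p\epsilon\, I_{m+n} \;+\; \sigma\, A\bigl[(1+\kappa_c\epsilon)I_{mn}-V_2\bigr]^{-1} V_2\, A^T,
\]
is nonsingular. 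Now $D:=[(1+\kappa_c\epsilon)I_{mn}-V_2]^{-1}V_2$ is diagonal with nonnegative entries, so $ADA^T$ is positive semidefinite; adding $\kappa_p\epsilon I_{m+n}$ with $\kappa_p,\epsilon>0$ makes $S(\epsilon)$ strictly positive definite, hence invertible. Chaining the equivalences back yields nonsingularity of $\widehat{\mathcal{E}}'(\epsilon,x,y)$.

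There is essentially no hard step here; the only thing one must be careful about is checking that the diagonal entries of $V_2 = \Phi_2'(\epsilon,w)$ really lie in $[0,1]$, which follows entry-wise from the explicit formula for $h_2'(\epsilon,t)$ given in Section \ref{subsec:huber}. The role of the perturbations $\kappa_p\epsilon y$ and $\kappa_c\epsilon x$ in the definition of $\mathcal{E}$ becomes clear through this argument: without $\kappa_c\epsilon$ the matrix $I_{mn}-V_2$ could be singular (on indices with $w_i\geq\epsilon$), and without $\kappa_p\epsilon$ the Schur complement would collapse to $\sigma A D A^T$, which is generally rank-deficient because $(e_m;-e_n)\in\ker(AA^T)$ as recalled in Section \ref{subsec:IPM}.
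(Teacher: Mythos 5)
Your proposal is correct and follows essentially the same route as the paper: the paper also disposes of $\Delta\epsilon$ via the first row, eliminates $\Delta x$ using the invertibility of $(1+\kappa_c\epsilon)I_{mn}-V_2$, and concludes from the symmetric positive definiteness of $\kappa_p\epsilon I_{m+n}+\sigma A[(1+\kappa_c\epsilon)I_{mn}-V_2]^{-1}V_2A^T$. The only cosmetic difference is that the paper phrases the argument as showing the homogeneous system has only the trivial solution, whereas you phrase it via determinants and the Schur complement; the substance is identical.
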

\begin{proof}
	For any $ \epsilon > 0 $, it is easy to see that $ \widehat{\mathcal{E}} $ is continuously differentiable, and it holds that
	\begin{equation*}
		\widehat{\mathcal{E}}'(\epsilon, x, y) = 
		\begin{pmatrix}
			1 & 0 & 0 \\
			\kappa_py & A & \kappa_p\epsilon I_{m+n} \\
			\kappa_cx - V_1 & (1+\kappa_c\epsilon)I_{mn} - V_2 & -\sigma V_2A^T
		\end{pmatrix},
	\end{equation*}
	where $ V_1:= \Phi_1'(\epsilon, w)$ and $ V_2:= \Phi_2'(\epsilon, w) $ denote the partial derivatives of $ \Phi $ with respect to the first and the second arguments at the referenced point with $w:=x - \sigma(c-A^Ty)$, respectively. To show that $ \widehat{\mathcal{E}}'(\epsilon, x, y) $ is nonsingular, it suffices to show that the following linear system
	\[
		\widehat{\mathcal{E}}'(\epsilon, x, y)(\Delta \epsilon; \Delta x; \Delta y) = 0
	\]
	only has a trivial solution. It is obvious that $ \Delta \epsilon = 0 $. Since 
	\[
		\left[(1+\kappa_c\epsilon)I_{mn} - V_2\right]\Delta x -\sigma V_2A^T\Delta y = 0,
	\]
	it follows that $\Delta x = \sigma \left[(1+\kappa_c\epsilon)I_{mn} -V_2 \right]^{-1}V_2A^T\Delta y$. As a consequence, we get 
	\[
		\left[\kappa_p\epsilon I_{m+n} + \sigma A \left((1+\kappa_c\epsilon)I_{mn} -V_2 \right)^{-1}V_2 A^T \right]\Delta y = 0.
	\]
	Clearly, the coefficient matrix of the above equation is symmetric positive definite. It then follows that $ \Delta y = 0 $ which further implies that $ \Delta x = 0 $. Therefore, the proof is completed.
\end{proof}

Lemma \ref{lemma-nonsingular} shows that the linear system in \eqref{eq-alg-smoothingNewton-1} is always solvable, and the Newton direction $ (\Delta^k, \Delta x^k, \Delta y^k) $ is well-defined. Next, we shall show that the line search procedure in Algorithm \ref{alg:smoothingNewton} is well-defined, i.e., $ \ell_k $ is always finite as long as $ \epsilon^k $ is positive. 

\begin{lemma}
\label{lemma-line-search}
{For any $ (\epsilon, x, y) $ with $ \epsilon > 0 $, there exist a positive scalar $ \bar \alpha \in (0, 1] $ such that for any $ \alpha\in (0, \bar \alpha] $, it holds that}
\[
	\phi(\epsilon+\alpha\Delta \epsilon, x + \alpha \Delta x, y + \alpha \Delta y) \leq \left[1 - 2\mu(1-\delta)\alpha \right]\phi(\epsilon, x, y),
\]
where $ (\Delta\epsilon, \Delta x, \Delta y) $ satisfies
\[
	\widehat{\mathcal{E}}(\epsilon, x, y) + \widehat{\mathcal{E}}'(\epsilon, x, y)(\Delta \epsilon; \Delta x; \Delta y) = (\zeta(\epsilon, x, y)\epsilon^0; 0; 0),
\]
$ \mu\in(0,1/2) $, $ \epsilon^0 > 0 $, and $ \delta := r\epsilon^0 < 1 $.
\end{lemma}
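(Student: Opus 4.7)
The plan is to exploit the fact that, for $\epsilon > 0$, the mapping $\widehat{\mathcal{E}}$ is continuously differentiable in a neighborhood of $(\epsilon, x, y)$ (this is where the Huber smoothing pays off), hence the merit function $\phi = \|\widehat{\mathcal{E}}\|^2$ is continuously differentiable there with $\nabla \phi(\epsilon,x,y)^T \Delta = 2\widehat{\mathcal{E}}(\epsilon,x,y)^T \widehat{\mathcal{E}}'(\epsilon,x,y)\Delta$ for any direction $\Delta = (\Delta\epsilon;\Delta x;\Delta y)$. The strategy is the standard one for squared smoothing Newton methods: show that the Newton direction is a descent direction for $\phi$ with a quantitative descent constant of $2(1-\delta)$, and then invoke a first-order Taylor expansion to produce the required Armijo-type decrease.

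First I would substitute the Newton equation $\widehat{\mathcal{E}}'(\epsilon,x,y)\Delta = -\widehat{\mathcal{E}}(\epsilon,x,y) + (\zeta(\epsilon,x,y)\epsilon^0;0;0)$ into $\nabla\phi^T \Delta$ to obtain
\[
\nabla\phi(\epsilon,x,y)^T \Delta \;=\; -2\|\widehat{\mathcal{E}}(\epsilon,x,y)\|^2 + 2\epsilon\,\zeta(\epsilon,x,y)\,\epsilon^0,
\]
where the second term arises because the only nonzero component of the right-hand side pairs with the first component of $\widehat{\mathcal{E}}$, which is $\epsilon$. The key estimate is then $\epsilon\,\zeta(\epsilon,x,y)\,\epsilon^0 \le \delta\,\|\widehat{\mathcal{E}}(\epsilon,x,y)\|^2$. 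This is proved by case analysis using $0 < \epsilon \le \|\widehat{\mathcal{E}}(\epsilon,x,y)\|$ and the definition of $\zeta$: if $\|\widehat{\mathcal{E}}\| \le 1$ then $\zeta = r\|\widehat{\mathcal{E}}\|^{1+\tau}$ and the product is bounded by $r\epsilon^0\|\widehat{\mathcal{E}}\|^{2+\tau} \le \delta\|\widehat{\mathcal{E}}\|^2$, while if $\|\widehat{\mathcal{E}}\| > 1$ then $\zeta = r$ and $\epsilon r\epsilon^0 \le \delta\|\widehat{\mathcal{E}}\|^2$. Combining gives $\nabla\phi(\epsilon,x,y)^T \Delta \le -2(1-\delta)\phi(\epsilon,x,y)$, which is the crucial descent property.

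The final step is a routine Taylor expansion. Since $\phi$ is $C^1$ on an open set containing $(\epsilon,x,y)$ (using $\epsilon > 0$ and continuity), for $\alpha > 0$ small enough that $\epsilon + \alpha\Delta\epsilon > 0$,
\[
\phi(\epsilon+\alpha\Delta\epsilon, x+\alpha\Delta x, y+\alpha\Delta y) \;=\; \phi(\epsilon,x,y) + \alpha\,\nabla\phi(\epsilon,x,y)^T\Delta + o(\alpha).
\]
Plugging in the descent bound yields
\[
\phi(\epsilon+\alpha\Delta\epsilon, x+\alpha\Delta x, y+\alpha\Delta y) - \left[1 - 2\mu(1-\delta)\alpha\right]\phi(\epsilon,x,y) \;\le\; -2(1-\delta)(1-\mu)\alpha\,\phi(\epsilon,x,y) + o(\alpha),
\]
and since $\mu \in (0,1/2)$ gives $1-\mu > 1/2 > 0$, the leading term dominates $o(\alpha)$ for all sufficiently small $\alpha$. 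Hence there exists $\bar\alpha \in (0,1]$ such that the required inequality holds on $(0,\bar\alpha]$.

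The main obstacle is the sharp bound $\epsilon\,\zeta(\epsilon,x,y)\,\epsilon^0 \le \delta\,\phi(\epsilon,x,y)$; everything else is Taylor expansion. It forces the particular form of $\zeta$ (the $\min\{1,\|\widehat{\mathcal{E}}\|^{1+\tau}\}$ cap together with $\delta = r\epsilon^0 < 1$) to be calibrated so that the descent constant $1-\delta$ is strictly positive, independent of the iterate. If one attempted to drop the truncation or allow $\delta \ge 1$, the descent property would fail and the line search would not be guaranteed to terminate.
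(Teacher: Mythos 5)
Your proposal is correct and follows essentially the same route as the paper's proof: compute the directional derivative of $\phi$ along the Newton direction, bound the cross term $2\epsilon\,\zeta\,\epsilon^0$ by $2\delta\,\phi$ via a case split on whether $\lVert\widehat{\mathcal{E}}\rVert$ exceeds $1$ (the paper reaches the same bound via $\langle \widehat{\mathcal{E}},(\zeta\epsilon^0;0;0)\rangle \le \zeta\epsilon^0\lVert\widehat{\mathcal{E}}\rVert$, which is equivalent to your use of $\epsilon\le\lVert\widehat{\mathcal{E}}\rVert$), and conclude with a first-order Taylor expansion and $\mu\in(0,1/2)$.
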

\begin{proof}
	First, since $ \epsilon > 0 $, by Lemma \ref{lemma-nonsingular}, we see that $ \widehat{\mathcal{E}}'(\epsilon, x, y) $ is nonsingular. Hence, $ (\Delta\epsilon, \Delta x, \Delta y) $ is well-defined. Next, we can check that
	\begin{equation}\label{eq-lemma-line-search-1}
		\begin{aligned}
			& \left\langle \nabla \phi(\epsilon, x, y), (\Delta \epsilon, \Delta x, \Delta y)\right\rangle 
			\;= \; \left\langle 2\nabla \widehat{\mathcal{E}}(\epsilon, x, y) \widehat{\mathcal{E}}(\epsilon, x, y), (\Delta \epsilon; \Delta x; \Delta y)\right\rangle \\
			= &\; \left\langle 2\widehat{\mathcal{E}}(\epsilon, x, y), \widehat{\mathcal{E}}'(\epsilon, x, y)(\Delta \epsilon; \Delta x; \Delta y)\right\rangle 
			\;=\; \left\langle 2\widehat{\mathcal{E}}(\epsilon, x, y), (\zeta(\epsilon, x, y)\epsilon^0; 0; 0) - \widehat{\mathcal{E}}(\epsilon, x, y)\right\rangle \\
			\leq &\; -2\phi(\epsilon, x, y) + 2r\epsilon^0\left\lVert\widehat{\mathcal{E}}(x,y,z)\right\rVert \min\left\{1, \left\lVert \widehat{\mathcal{E}}(\epsilon, x, y)\right\rVert^{1+\tau}\right\}.
		\end{aligned}
	\end{equation}
We consider two possible cases: $ \left\lVert \widehat{\mathcal{E}}(\epsilon, x, y)\right\rVert > 1 $ and $ \left\lVert \widehat{\mathcal{E}}(\epsilon, x, y)\right\rVert \leq 1 $. If $ \left\lVert \widehat{\mathcal{E}}(\epsilon, x, y)\right\rVert > 1 $, then \eqref{eq-lemma-line-search-1} implies that 
\[
	\left\langle \nabla \phi(\epsilon, x, y), (\Delta \epsilon, \Delta x, \Delta y)\right\rangle \leq -2\phi(\epsilon, x, y) + 2r\epsilon^0\left\lVert\widehat{\mathcal{E}}(x,y,z)\right\rVert  \leq -2(1- \delta)\phi(\epsilon, x, y).
\]
If $ \left\lVert \widehat{\mathcal{E}}(\epsilon, x, y)\right\rVert \leq 1 $, then  \eqref{eq-lemma-line-search-1} implies that 
\[
	\begin{aligned}
		\left\langle \nabla \phi(\epsilon, x, y), (\Delta \epsilon, \Delta x, \Delta y)\right\rangle 
		 \leq 
%		 &\; - 2\phi(\epsilon, x, y) + 2r\epsilon^0\left\lVert\widehat{\mathcal{E}}(x,y,z)\right\rVert\left\lVert \widehat{\mathcal{E}}(\epsilon, x, y)\right\rVert^{1+\tau} \\
%		  = 
		  &\;  - 2\phi(\epsilon, x, y) + 2r\epsilon^0\left\lVert \widehat{\mathcal{E}}(\epsilon, x, y)\right\rVert^{2+\tau}  \\
		  \leq &\; - 2\phi(\epsilon, x, y) + 2r\epsilon^0\phi(\epsilon, x, y) \\
		  = &\; -2(1- \delta)\phi(\epsilon, x, y).
	\end{aligned}
\]
Thus, in both cases, it always holds that
\[
\left\langle \nabla \phi(\epsilon, x, y), (\Delta \epsilon, \Delta x, \Delta y)\right\rangle\leq -2(1- \delta)\phi(\epsilon, x, y).
\]
Now by the Taylor expansion, we get  
\[
	\begin{aligned}
		\phi(\epsilon+\alpha\Delta \epsilon, x + \alpha \Delta x, y + \alpha \Delta y)
		= &\; \phi(\epsilon, x, y) + \alpha \left\langle \nabla \phi(\epsilon, x, y), (\Delta \epsilon, \Delta x, \Delta y)\right\rangle + o(\alpha) \\
		\leq  &\; \phi(\epsilon, x, y) - 2\alpha(1-\delta)\phi(\epsilon, x, y) + o(\alpha).
	\end{aligned}
\]
Using the fact that $ \mu \in (0, 1/2) $, there exists $ \bar\alpha \in (0,1] $ such that for $ \alpha\in (0, \bar\alpha] $, it always holds that
\[
	\phi(\epsilon+\alpha\Delta \epsilon, x + \alpha \Delta x, y + \alpha \Delta y) \leq \left[1 - 2\mu(1-\delta)\alpha \right]\phi(\epsilon, x, y),
\]
which completes the proof.
\end{proof}

Lemma \ref{lemma-line-search} indicates that when $ \epsilon^k >0 $, the line search procedure in Algorithm \ref{alg:smoothingNewton} is always well-defined. {Additionally, we need to show} that Algorithm \ref{alg:smoothingNewton} generates a sequence $ \{\epsilon^k\} $ that is always positive before termination. The next lemma shows that $ \epsilon^k $ is indeed lower bounded by $ \zeta_k\epsilon^0 $, which is positive before termination. 

\begin{lemma}
	\label{lemma:epsilon-lower-bounded}
	Suppose at the $ k $-th iteration of Algorithm \ref{alg:smoothingNewton} that $ \epsilon^k>0 $ and $ \epsilon^k \geq \zeta(\epsilon^k, x^k, y^k)\epsilon^0 $. Then, for any $ \alpha \in [0,1] $ satisfying
	\begin{equation} \label{eq-lemma-epsilon-1}
		\phi(\epsilon^k + \alpha \Delta \epsilon^k, x^k + \alpha \Delta x^k, y^k + \alpha \Delta y^k) \leq \left[1 - 2\mu(1-\delta)\alpha\right] \phi(\epsilon^k, x^k, y^k),
	\end{equation}
	it holds that
	$
		\epsilon^k + \alpha\Delta \epsilon^k \geq \zeta(\epsilon^k + \alpha \Delta \epsilon^k, x^k + \alpha \Delta x^k, y^k + \alpha \Delta y^k)\epsilon^0.
	$
\end{lemma}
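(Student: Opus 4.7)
The plan is to combine two monotonicity observations: (i) the new smoothing parameter is a convex combination of $\epsilon^k$ and $\zeta_k \epsilon^0$, and (ii) the sufficient decrease condition in $\phi$ forces $\|\widehat{\mathcal{E}}\|$ to be nonincreasing, which in turn makes $\zeta$ nonincreasing along the accepted step. Putting the two together yields the claim.

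First, I would read off the scalar $\Delta\epsilon^k$ from the first row of the Newton system \eqref{eq-alg-smoothingNewton-1}. Since that row reads $\epsilon^k + \Delta\epsilon^k = \zeta_k\epsilon^0$, we obtain $\Delta\epsilon^k = \zeta_k\epsilon^0 - \epsilon^k$, and hence
\[
\epsilon^k + \alpha \Delta\epsilon^k \;=\; (1-\alpha)\epsilon^k + \alpha \zeta_k \epsilon^0, \qquad \alpha \in [0,1].
\]
Using the hypothesis $\epsilon^k \geq \zeta_k \epsilon^0$ together with $\alpha \in [0,1]$, this convex combination is bounded below by $\zeta_k \epsilon^0$, i.e.\ $\epsilon^k + \alpha\Delta\epsilon^k \geq \zeta(\epsilon^k,x^k,y^k)\epsilon^0$.

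Next, I would exploit the descent inequality \eqref{eq-lemma-epsilon-1}. Since $1 - 2\mu(1-\delta)\alpha \leq 1$ for $\alpha \in [0,1]$, we have
\[
\bigl\| \widehat{\mathcal{E}}(\epsilon^k + \alpha \Delta\epsilon^k, x^k + \alpha\Delta x^k, y^k + \alpha\Delta y^k)\bigr\| \;\leq\; \bigl\| \widehat{\mathcal{E}}(\epsilon^k, x^k, y^k)\bigr\|.
\]
The function $t \mapsto r\min\{1, t^{1+\tau}\}$ is monotonically nondecreasing for $t \geq 0$, so this norm inequality propagates to
\[
\zeta(\epsilon^k + \alpha\Delta\epsilon^k, x^k + \alpha\Delta x^k, y^k + \alpha\Delta y^k) \;\leq\; \zeta(\epsilon^k, x^k, y^k) \;=\; \zeta_k.
\]

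Finally, chaining the two bounds gives
\[
\epsilon^k + \alpha\Delta\epsilon^k \;\geq\; \zeta_k \epsilon^0 \;\geq\; \zeta(\epsilon^k + \alpha\Delta\epsilon^k, x^k + \alpha\Delta x^k, y^k + \alpha\Delta y^k)\,\epsilon^0,
\]
which is exactly the desired conclusion. There is no real obstacle here; the only subtlety is recognizing that the first component of the right-hand side $(\zeta_k\epsilon^0; 0; 0)$ of the Newton equation pins down $\Delta\epsilon^k$ explicitly so that $\epsilon^k + \alpha\Delta\epsilon^k$ is a clean convex combination, and that the monotone structure of $\zeta$ as a function of $\|\widehat{\mathcal{E}}\|$ lets the descent on $\phi$ pass to a bound on $\zeta$ at the new iterate.
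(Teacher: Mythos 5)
Your proof is correct and follows essentially the same route as the paper's: identify $\Delta\epsilon^k = \zeta_k\epsilon^0 - \epsilon^k$ from the first row of the Newton system, bound $\epsilon^k + \alpha\Delta\epsilon^k$ below by $\zeta_k\epsilon^0$ (the paper phrases this via $\Delta\epsilon^k \leq 0$ rather than as a convex combination, but it is the same observation), and then use the descent condition together with the monotonicity of $t\mapsto r\min\{1,t^{1+\tau}\}$ to get $\zeta(\text{new iterate}) \leq \zeta_k$. Your write-up is in fact slightly more explicit than the paper's about why the $\phi$-decrease propagates to a bound on $\zeta$.
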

\begin{proof}
	From \eqref{eq-alg-smoothingNewton-1}, it is not difficult to see that $ \Delta \epsilon^k = -\epsilon^k + \zeta(\epsilon^k, x^k, y^k)\epsilon^0 $. Hence, $ \Delta \epsilon^k \leq 0 $, and for any $ \alpha \in [0,1] $, it holds that 
	\[
		\begin{aligned}
			&\; \epsilon^k + \alpha\Delta \epsilon^k - \zeta(\epsilon^k + \alpha \Delta \epsilon^k, x^k + \alpha \Delta x^k, y^k + \alpha \Delta y^k)\epsilon^0 \\
			\geq &\; \epsilon^k + \Delta \epsilon^k - \zeta(\epsilon^k + \alpha \Delta \epsilon^k, x^k + \alpha \Delta x^k, y^k + \alpha \Delta y^k)\epsilon^0 \\
			= &\; \zeta(\epsilon^k, x^k, y^k)\epsilon^0 - \zeta(\epsilon^k + \alpha \Delta \epsilon^k, x^k + \alpha \Delta x^k, y^k + \alpha \Delta y^k)\epsilon^0 \\
			\geq &\; 0,
		\end{aligned}
	\] 
	where in the last inequality we have used the fact that \eqref{eq-lemma-epsilon-1} implies that 
	\[
		\zeta(\epsilon^k, x^k, y^k) \geq \zeta(\epsilon^k + \alpha \Delta \epsilon^k, x^k + \alpha \Delta x^k, y^k + \alpha \Delta y^k)
	\]
	by the definition of the function $ \zeta $. Therefore, the proof is completed. 
\end{proof}
The above lemma also explains the role of the auxiliary function $ \zeta $ in the algorithmic design. In particular, when we choose $ r $ to be small or $ \tau  $ to be close to one, then the lower bound $ \zeta_k\epsilon^0 $ is small. As a result, $ \epsilon^k $ will also be small. However, a smaller $ \epsilon^k $ will worsen the conditioning of the linear system \eqref{eq-alg-smoothingNewton-1}. On the other hand, if we choose $ r  $ to be large or $ \tau $ to be close to zero, then, $ \epsilon^k  $ will be further away from zero and the algorithm would need more iterations to converge to an optimal solution. Therefore, there is a trade-off in choosing the parameters $ r $ and $ \tau  $ in  the practical implementation of Algorithm \ref{alg:smoothingNewton}. 

Now, we are ready to state the first convergence result of the Algorithm \ref{alg:smoothingNewton}. 
\begin{theorem}
\label{thm-global-convergence}
Algorithm \ref{alg:smoothingNewton} is well-defined and generates an infinite sequence $ \{(\epsilon^k, x^k, y^k)\}   $	such that $ \epsilon^k \geq \zeta(\epsilon^k, x^k, y^k)\epsilon^0 $ with the property that any accumulation point $ (\bar\epsilon, \bar x, \bar y) $ of the sequence $ \{(\epsilon^k, x^k, y^k)\}   $ is a solution of the nonlinear system $ \widehat{\mathcal{E}}(\epsilon, x, y) = 0  $.
\end{theorem}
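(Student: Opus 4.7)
The plan is to establish the three claims in the order they appear: (i) the algorithm is well-defined, (ii) the lower bound $\epsilon^k \ge \zeta_k \epsilon^0$ holds along the iterates, and (iii) any accumulation point solves $\widehat{\mathcal{E}}=0$. The first two are a routine induction on top of Lemmas \ref{lemma-nonsingular}--\ref{lemma:epsilon-lower-bounded}; the real work is in (iii), which is a standard compactness-plus-Armijo argument that needs a continuity-of-Newton-direction observation.

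First I would run an induction on $k$. For the base case, $\epsilon^0>0$ and $\zeta_0\epsilon^0 \le r\epsilon^0 = \delta < 1 \le \epsilon^0/\epsilon^0\cdot\epsilon^0$, so $\epsilon^0 \ge \zeta_0\epsilon^0$. For the inductive step, assuming $\epsilon^k>0$, Lemma \ref{lemma-nonsingular} guarantees that $\widehat{\mathcal{E}}'(\epsilon^k,x^k,y^k)$ is nonsingular, so $\Delta^k$ in \eqref{eq-alg-smoothingNewton-1} is well-defined; Lemma \ref{lemma-line-search} then ensures that the Armijo condition in the inner loop is satisfied for all sufficiently small stepsizes, so $\ell_k$ is finite. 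Lemma \ref{lemma:epsilon-lower-bounded} applied with $\alpha=\rho^{\ell_k}$ yields $\epsilon^{k+1}\ge \zeta_{k+1}\epsilon^0$. Provided $\widehat{\mathcal{E}}(\epsilon^{k+1},x^{k+1},y^{k+1})\neq 0$ (otherwise the algorithm terminates and the conclusion trivially holds), we have $\zeta_{k+1}>0$, hence $\epsilon^{k+1}>0$, closing the induction.

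Next, from the Armijo test the sequence $\phi_k := \phi(\epsilon^k,x^k,y^k)$ is monotonically decreasing and bounded below by $0$, so it converges to some $\phi^\ast \ge 0$. Let $(\bar\epsilon,\bar x,\bar y)$ be an accumulation point along a subsequence $\{k_j\}$. By continuity of $\phi$, $\phi(\bar\epsilon,\bar x,\bar y)=\phi^\ast$. I would then argue $\bar\epsilon > 0$ in the non-trivial case: passing to the limit in $\epsilon^{k_j}\ge \zeta_{k_j}\epsilon^0$ and using continuity of $\zeta$ gives $\bar\epsilon \ge r\min\{1,\|\widehat{\mathcal{E}}(\bar\epsilon,\bar x,\bar y)\|^{1+\tau}\}\epsilon^0$, which is positive whenever $\widehat{\mathcal{E}}(\bar\epsilon,\bar x,\bar y)\neq 0$.

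To close out (iii) I argue by contradiction: suppose $\widehat{\mathcal{E}}(\bar\epsilon,\bar x,\bar y)\neq 0$, so $\phi^\ast>0$ and $\bar\epsilon>0$. Because $\bar\epsilon>0$, Lemma \ref{lemma-nonsingular} ensures $\widehat{\mathcal{E}}'$ is nonsingular in a neighborhood of $(\bar\epsilon,\bar x,\bar y)$, and by Cramer's rule the Newton direction $\Delta$ solving \eqref{eq-alg-smoothingNewton-1} is a continuous function of the base point there. Apply Lemma \ref{lemma-line-search} at $(\bar\epsilon,\bar x,\bar y)$ to obtain some $\bar\alpha\in(0,1]$ and choose the smallest integer $\bar\ell\ge 0$ with $\rho^{\bar\ell}\le \bar\alpha$. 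The key step (and the main obstacle) is to upgrade this pointwise statement to a uniform one along the subsequence: by continuity of $\widehat{\mathcal{E}}$, of $\Delta$, and of $\phi$, for all $j$ sufficiently large we still have
\[
\phi(\epsilon^{k_j}+\rho^{\bar\ell}\Delta\epsilon^{k_j},\,x^{k_j}+\rho^{\bar\ell}\Delta x^{k_j},\,y^{k_j}+\rho^{\bar\ell}\Delta y^{k_j}) \le [1-2\mu(1-\delta)\rho^{\bar\ell}]\,\phi_{k_j},
\]
so the Armijo backtracking in Algorithm \ref{alg:smoothingNewton} accepts a step no longer than $\rho^{\bar\ell}$, giving $\phi_{k_j+1}\le[1-2\mu(1-\delta)\rho^{\bar\ell}]\phi_{k_j}$. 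Taking $j\to\infty$ yields $\phi^\ast \le [1-2\mu(1-\delta)\rho^{\bar\ell}]\phi^\ast$, which forces $\phi^\ast = 0$, contradicting $\phi^\ast>0$. Hence $\widehat{\mathcal{E}}(\bar\epsilon,\bar x,\bar y)=0$, as required. The only delicate points are the verification that $\Delta$ depends continuously on the iterate (valid precisely because $\bar\epsilon>0$) and that $\bar\alpha$ can be chosen uniformly over a neighborhood; both follow from continuity arguments but need to be spelled out carefully.
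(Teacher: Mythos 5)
Your proposal is correct and follows essentially the same route as the paper's proof: induction via Lemmas \ref{lemma-nonsingular}--\ref{lemma:epsilon-lower-bounded} for well-definedness and the bound $\epsilon^k\ge\zeta_k\epsilon^0$, monotone decrease of $\phi$, and a contradiction argument in which $\bar\phi>0$ forces $\bar\epsilon>0$ and a uniform Armijo decrease $\phi_{k+1}\le[1-2\mu(1-\delta)\rho^{\bar\ell}]\phi_k$ along the subsequence. You are in fact slightly more careful than the paper in justifying the uniformity of $\bar\alpha$ over a neighborhood (via continuity of the Newton direction when $\bar\epsilon>0$), which the paper passes over silently; the only blemish is the garbled inequality chain in your base case, where the needed fact is simply $\zeta_0\le r<1$, hence $\zeta_0\epsilon^0<\epsilon^0$.
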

\begin{proof}
	It follows from Lemma \ref{lemma-nonsingular}, \ref{lemma-line-search} and  \ref{lemma:epsilon-lower-bounded}  that Algorithm \ref{alg:smoothingNewton} is well-defined, and it generates an infinite sequence $ \{(\epsilon^k, x^k, y^k)\}   $	such that $ \epsilon^k \geq \zeta(\epsilon^k, x^k, y^k)\epsilon^0 $. Let $ (\bar\epsilon, \bar x, \bar y) $ be any accumulation point of the sequence $ \{(\epsilon^k, x^k, y^k)\}   $, if it exists. By taking a subsequence if necessary, we may assume without loss of generality that  $ \{(\epsilon^k, x^k, y^k)\}   $ converges to $ (\bar\epsilon, \bar x, \bar y) $. Since the line-search scheme is well-defined, it follows that 
	\[
		\phi(\epsilon^{k+1}, x^{k+1}, y^{k+1}) < \phi(\epsilon^k, x^k, y^k),\quad \forall k\geq 0.
	\] 
	That is, the sequence $ \{\phi(\epsilon^k, x^k, y^k) \} $ is monotonically decreasing. By the definition of the function $ \zeta $, we see that the sequence $ \{\zeta_k\} $ is also monotonically decreasing. Hence, there exist $ \bar \phi  $ and $ \bar \zeta  $ such that
	\[
		\phi(\epsilon^k, x^k, y^k)\rightarrow \bar \phi, \quad \zeta_k \rightarrow \bar \zeta,\quad\text{as}\; k\rightarrow \infty.
	\]
	As a consequence of the continuity of the function $ \phi $, we see that 
	\[
		\phi(\bar\epsilon,\bar x, \bar y) = \bar \phi \geq 0, \quad \zeta(\bar \epsilon, \bar x, \bar y) = \bar \zeta \geq 0, \quad \bar \epsilon \geq \zeta(\bar \epsilon, \bar x, \bar y)\epsilon^0. 
	\]
	
	We next show that $ \bar\phi = \phi(\bar \epsilon, \bar x, \bar y) = 0 $ by contradiction. To this end, let us assume that $ \bar\phi > 0 $. This implies that $ \bar \zeta > 0 $ and that $ \bar \epsilon \geq \bar\zeta\epsilon^0 > 0 $. Then, by Lemma \ref{lemma-line-search}, we see that there exist an open neighbourhood $ \mathcal{U} $ of $ (\bar \epsilon, \bar x, \bar y) $ and $ \bar \alpha \in (0,1]  $ such that $ (\epsilon^k, x^k, y^k) \in \mathcal{U} $ with $ \epsilon^k > 0 $ and for any $ \alpha \in (0, \bar \alpha] $, it holds that  for $ k\geq 0 $ sufficiently large,
	 \[
	 	\phi(\epsilon^k+\alpha \Delta\epsilon^k, x^k + \alpha \Delta x^k, y^k + \alpha \Delta y^k)\leq \left[1 - 2\mu(1-\delta)\alpha\right]\phi(\epsilon^k, x^k, y^k).
	 \]
	The existence of the fixed number $ \bar\alpha \in (0,1] $ further indicates that there exists a fixed nonnegative integer $ \ell  $ such that $ \rho^\ell \in (0, \bar \alpha] $ and $ \rho^{\ell_k}\geq \rho^\ell $ for all $ k\geq 0 $ sufficiently large. Therefore, we can check that 
	 \[
	 	\phi(\epsilon^{k+1}, x^{k+1}, y^{k+1}) \leq  \left[1 - 2\mu(1-\delta)\rho^{\ell_k}\right]\phi(\epsilon^k, x^k, y^k) \leq \left[1 - 2\mu(1-\delta)\rho^{\ell}\right]\phi(\epsilon^k, x^k, y^k),
	 \]
	for all $ k\geq 0 $ sufficiently large. Then, by letting $ k\rightarrow \infty $, we see that $\bar\phi \leq \left[1 - 2\mu(1-\delta)\rho^{\ell}\right] \bar\phi$. This implies that $ \bar\phi \leq 0 $, which contradicts the assumption that $ \bar\phi >0 $. Therefore, we conclude that $ \bar \phi = 0 $, i.e., $ \widehat{\mathcal{E}}(\bar \epsilon, \bar x, \bar y) = 0 $. This completes the proof.
\end{proof}

From our previous convergence analysis, we see how the natural merit function $ \phi $ helps us to establish the convergence properties. Without such a continuously differentiable merit function, further stronger assumptions and much more complicated analysis may be required. Therefore, we are able to design a Newton-type method that is able to exploit the solution sparsity and admit global convergence properties by employing the SqSN method via the Huber function. Following the classical results on the local convergence rate of Newton-type methods, next we further show that Algorithm \ref{alg:smoothingNewton} has a local superlinear convergence rate under the assumption that every element of $ \partial\widehat{\mathcal{E}}(\bar \epsilon, \bar x, \bar y) $ is nonsingular. 

\begin{theorem}
	\label{thm-superlinear-rate}
	Let $ (\bar \epsilon,\bar x, \bar y) $ be any accumulation point  of the sequence $ \{(\epsilon^k, x^k, y^k)\} $ (if it exists) generated by the Algorithm \ref{alg:smoothingNewton}. Suppose that every element of $ \partial\widehat{\mathcal{E}}(\bar \epsilon, \bar x, \bar y) $ is nonsingular, then the whole sequence $ \{(\epsilon^k, x^k, y^k)\} $ converges to $ (\bar \epsilon,\bar x, \bar y) $ superlinearly. In particular, it holds that 
	\[
		\left\lVert \begin{pmatrix}
			\epsilon^{k+1} - \bar \epsilon \\
			x^{k+1} - \bar x \\
			y^{k+1} - \bar y
		\end{pmatrix} \right\rVert = O\left(\left\lVert \begin{pmatrix}
			\epsilon^{k} - \bar \epsilon \\
			x^{k} - \bar x \\
			y^{k} - \bar y
		\end{pmatrix} \right\rVert^{1+\tau}\right).
	\]
\end{theorem}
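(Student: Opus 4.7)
The plan is to carry out the standard two-step analysis for smoothing Newton methods: first verify that full Newton steps are eventually accepted near $(\bar \epsilon,\bar x,\bar y)$, then combine nonsingularity of the generalized Jacobian with strong semismoothness of $\widehat{\mathcal{E}}$ to extract the rate $1+\tau$.

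To set the stage, note that $\widehat{\mathcal{E}}$ is strongly semismooth everywhere: the Huber function $h(\cdot,\cdot)$ in \eqref{eq-huber-function} is piecewise smooth with polynomial pieces, so $\Phi$ and hence $\widehat{\mathcal{E}}$ inherit strong semismoothness. By Theorem \ref{thm-global-convergence}, the hypothesized accumulation point satisfies $\widehat{\mathcal{E}}(\bar \epsilon,\bar x,\bar y)=0$; in particular the first component gives $\bar\epsilon = 0$. Writing $u:=(\epsilon,x,y)$ and $\bar u := (\bar\epsilon,\bar x,\bar y)$, the nonsingularity hypothesis on $\partial\widehat{\mathcal{E}}(\bar u)$ together with the upper semicontinuity of the Clarke Jacobian yields a neighborhood $N$ of $\bar u$ and a constant $\kappa > 0$ with $\|V^{-1}\| \leq \kappa$ for every $V \in \partial\widehat{\mathcal{E}}(u)$ and every $u \in N$; this also implies a local error bound $c\|u-\bar u\| \leq \|\widehat{\mathcal{E}}(u)\| \leq L\|u-\bar u\|$ on a possibly smaller neighborhood, for constants $c,L>0$.

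The next step is to show that $\ell_k = 0$ is accepted for all large $k$ in the subsequence entering $N$. From \eqref{eq-alg-smoothingNewton-1}, the trial point satisfies
\[
u^k + \Delta^k - \bar u
= \bigl(\widehat{\mathcal{E}}'(u^k)\bigr)^{-1}\Bigl[(\zeta_k\epsilon^0;0;0) + \widehat{\mathcal{E}}'(u^k)(u^k-\bar u) - \widehat{\mathcal{E}}(u^k)\Bigr].
\]
Strong semismoothness at $\bar u$ (with $\widehat{\mathcal{E}}'(u^k) \in \partial\widehat{\mathcal{E}}(u^k)$) bounds the last two terms by $O(\|u^k-\bar u\|^2)$, and the definition of $\zeta$ together with Lipschitz continuity of $\widehat{\mathcal{E}}$ bounds the first by $O(\|\widehat{\mathcal{E}}(u^k)\|^{1+\tau}) = O(\|u^k-\bar u\|^{1+\tau})$. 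Since $\tau \in (0,1]$, these combine to
\[
\|u^k + \Delta^k - \bar u\| = O(\|u^k-\bar u\|^{1+\tau}).
\]
By the local upper and lower error bounds, $\phi(u^k+\Delta^k) = O(\|u^k-\bar u\|^{2(1+\tau)}) = o(\phi(u^k))$, so the line-search test in Algorithm \ref{alg:smoothingNewton} is met with $\ell_k = 0$ for all sufficiently large $k$ in that subsequence, and the full Newton step is taken.

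Finally, to pass from subsequence to whole sequence: once an iterate of the converging subsequence lies in a sufficiently small ball around $\bar u$, the displayed contraction $\|u^{k+1}-\bar u\| = O(\|u^k-\bar u\|^{1+\tau})$ keeps all subsequent iterates inside $N$, forcing $u^k \to \bar u$ and the claimed rate for the entire tail. I expect the main technical obstacle to be the strong semismoothness claim at points with $\epsilon = 0$, where the two branches in \eqref{eq-huber-function} meet: one has to verify carefully, using the explicit description of $\partial_B h(0,\cdot)$ in \eqref{eq-partial-huber}, that the remainder estimate is genuinely $O(\|\cdot\|^2)$ rather than merely $o(\|\cdot\|)$, so that the rate exponent $1+\tau$ — and not just superlinearity in a weaker sense — is obtained.
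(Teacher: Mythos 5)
Your proposal is correct and follows essentially the same route as the paper's proof: strong semismoothness of $\widehat{\mathcal{E}}$, the bounded-inverse consequence of the nonsingularity assumption (via \cite[Proposition 3.1]{qi1993nonsmooth}), the decomposition of $u^k+\Delta^k-\bar u$ into a semismoothness remainder of order $\lVert u^k-\bar u\rVert^2$ plus a $\zeta_k\epsilon^0 = O(\lVert u^k-\bar u\rVert^{1+\tau})$ perturbation, the local error bound, and the resulting $\phi(u^k+\Delta^k)=o(\phi(u^k))$ showing the unit step is eventually accepted. Your closing remark about verifying the $O(\lVert\cdot\rVert^2)$ remainder at $\epsilon=0$ is resolved by the fact that the Huber function is piecewise polynomial, hence strongly semismooth, exactly as the paper asserts.
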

\begin{proof} The proof is given in Appendix A.
\end{proof}

It turns out that the conditions ensuring the local convergence rate of the Algorithm \ref{alg:smoothingNewton} are highly related to those for the SSN method. Classical results (see e.g., \cite[Theorem 3.2]{qi1993nonsmooth}) show that the SSN method admits local quadratic convergence rate under the conditions that $ F $ is strongly semismooth and every element of $ \partial F(\bar x, \bar y, \bar z) $ is nonsingular, where $ (\bar x, \bar y, \bar z) $ is a solution to the KKT system $ F(x, y, z) = 0 $ in \eqref{eq-kkt-mapping}. The following lemma shows the connection between the nonsingularity of $ \partial F(\bar x, \bar y, \bar z) $ and the nonsingularity of $ \partial \widehat{\mathcal{E}}(0, \bar x, \bar y) $. 

\begin{lemma}
	\label{lemma-kkt-jacobian-equivalent}
	Let $ (\bar x, \bar y, \bar z) $ be such that $ F(\bar x, \bar y, \bar z) = 0 $. Then the following two statements are equivalent.
	\begin{enumerate}
		\item Every element of $ \partial F(\bar x, \bar y, \bar z) $ is nonsingular.
		\item Every element of $ \partial \widehat{\mathcal{E}}(0, \bar x, \bar y) $ is nonsingular.
	\end{enumerate}
\end{lemma}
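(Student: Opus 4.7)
The plan is to match up elements of the two generalized Jacobians through the common object $\bar w := \bar x - \sigma\bar z = \bar x + \sigma(A^T\bar y - c)$, whose coordinate-wise subdifferentials $\partial\pi(\bar w_i)$ govern the nonsmoothness on both sides. First I would record the block structure of both Jacobians. Every $N\in\partial F(\bar x,\bar y,\bar z)$ has the form
\[
N \;=\; \begin{pmatrix}
A & 0 & 0 \\
0 & -A^T & -I_{mn} \\
I_{mn}-U & 0 & \sigma U
\end{pmatrix},
\]
where $U=\Diag(u)$ with $u_i\in\partial\pi(\bar w_i)$. Passing the explicit formula for $\widehat{\mathcal{E}}'(\epsilon,x,y)$ from Section~\ref{subsec:mainalg} to the limit $\epsilon\downarrow 0$ and taking convex hulls, every $M\in\partial\widehat{\mathcal{E}}(0,\bar x,\bar y)$ has the form
\[
M \;=\; \begin{pmatrix}
1 & 0 & 0 \\
\kappa_p\bar y & A & 0 \\
\kappa_c\bar x - V_1 & I_{mn}-V_2 & -\sigma V_2 A^T
\end{pmatrix},
\]
where $V_1,V_2$ are diagonal with $((V_1)_{ii},(V_2)_{ii})\in\partial h(0,\bar w_i)$. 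Inspecting \eqref{eq-partial-huber}, the projection of $\partial h(0,\bar w_i)$ onto its second coordinate is exactly $\partial\pi(\bar w_i)$, and the pair $(0,u_i)$ belongs to $\partial h(0,\bar w_i)$ for every $u_i\in\partial\pi(\bar w_i)$; hence the choice $V_1=0$, $V_2=U$ is always admissible.

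For the direction (1)$\Rightarrow$(2) I would take an arbitrary $M\in\partial\widehat{\mathcal{E}}(0,\bar x,\bar y)$ and solve $M(\Delta\epsilon;\Delta x;\Delta y)=0$. The top row forces $\Delta\epsilon=0$, which instantly kills the $V_1$, $\kappa_p\bar y$, and $\kappa_c\bar x$ contributions and collapses the remaining two block rows to $A\Delta x=0$ and $(I_{mn}-V_2)\Delta x - \sigma V_2 A^T\Delta y = 0$. Setting $U:=V_2$ produces a legitimate element $N\in\partial F(\bar x,\bar y,\bar z)$, and $N(\Delta x;\Delta y;\Delta z)=0$, after eliminating $\Delta z = -A^T\Delta y$ via its middle row, yields precisely the same two equations. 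The hypothesized nonsingularity of $N$ then gives $\Delta x=0$ and $\Delta y=0$, so $M$ is nonsingular as well.

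The reverse implication (2)$\Rightarrow$(1) runs in the opposite direction. Given $N\in\partial F(\bar x,\bar y,\bar z)$ with diagonal $U$, I would build $M\in\partial\widehat{\mathcal{E}}(0,\bar x,\bar y)$ with $V_1:=0$ and $V_2:=U$, a choice that lies in $\partial\widehat{\mathcal{E}}(0,\bar x,\bar y)$ by the admissibility observation from the first paragraph. Suppose $N(\Delta x;\Delta y;\Delta z)=0$; then the triple $(\Delta\epsilon,\Delta x,\Delta y):=(0,\Delta x,\Delta y)$ satisfies $M(\Delta\epsilon;\Delta x;\Delta y)=0$, since the first row is trivial, the second reduces to $A\Delta x=0$, and the third to $(I_{mn}-U)\Delta x - \sigma U A^T\Delta y = 0$, all of which are consequences of the $N$-system after substituting $\Delta z = -A^T\Delta y$. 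Nonsingularity of $M$ thus forces $\Delta x=\Delta y=0$, and the relation $\Delta z=-A^T\Delta y$ then delivers $\Delta z=0$, establishing the nonsingularity of $N$.

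The step I expect to require the most care is the first one: pinning down $\partial\widehat{\mathcal{E}}(0,\bar x,\bar y)$ precisely enough that (a) every $M$ in it has $(V_2)_{ii}\in\partial\pi(\bar w_i)$ and (b) every $U$ with $u_i\in\partial\pi(\bar w_i)$ arises as $V_2$ for some admissible $V_1$, which in our case can be taken to be zero. Both statements are consequences of \eqref{eq-partial-huber} together with the separable, coordinate-wise nature of $\Phi$, but they must be set up cleanly before the purely algebraic reduction to the common homogeneous system $\{\,A\Delta x=0,\ (I_{mn}-U)\Delta x=\sigma U A^T\Delta y\,\}$ can be used to transfer nonsingularity in either direction.
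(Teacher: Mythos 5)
Your proposal is correct and follows essentially the same route as the paper: both arguments identify elements of the two generalized Jacobians through the observation that the second components of $\partial h(0,\bar w_i)$ coincide with $\partial\pi(\bar w_i)$ (and that $(0,u_i)\in\partial h(0,\bar w_i)$ for every $u_i\in\partial\pi(\bar w_i)$), then note that once $\Delta\epsilon=0$ both homogeneous systems collapse to $A\Delta x=0$ and $(I_{mn}-U)\Delta x-\sigma UA^T\Delta y=0$ after eliminating $\Delta z=-A^T\Delta y$. Your write-up is somewhat more explicit than the paper's (which dispatches the reverse implication with ``similarly''), but there is no substantive difference in the argument.
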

\begin{proof}
	We first show statement 1 implies statement 2. Suppose that $ U \in \partial\widehat{\mathcal{E}}(0, \bar x, \bar y) $. Then there exists 
	$ V\in {\partial \Phi(0, \bar x - \sigma (c-A^T\bar y))} $ such that 
	\[
		U(\Delta \epsilon, \Delta x, \Delta y) = 
		\begin{pmatrix}
			\Delta \epsilon \\
			A\Delta x  \\
			\Delta x - V(\Delta \epsilon, \Delta x + \sigma A^T\Delta y)
		\end{pmatrix},\quad \forall (\Delta\epsilon, \Delta x, \Delta y) \in \mathbb{R}\times \mathbb{R}^{mn}\times \mathbb{R}^{m+n}.
	\]
	To show that $ U $ is nonsingular, it suffices to show that $ U(\Delta\epsilon,\Delta x, \Delta y) = 0 $, implies that $ (\Delta \epsilon, \Delta x, \Delta y) = 0 $. To this end, we assume that $ U(\Delta\epsilon,\Delta x, \Delta y) = 0 $. Then clearly, it holds that $ \Delta \epsilon = 0 $ and 
	\begin{equation} \label{eq-lemma-equivalent-1}
		\begin{pmatrix}
			A \Delta x \\
			-A^T \Delta y - \Delta z \\
			\Delta x - V(0, \Delta x - \sigma \Delta z)
		\end{pmatrix} = 
		\begin{pmatrix}
			0 \\ 0 \\ 0
		\end{pmatrix},
	\end{equation}
	by denoting that {$ \Delta z:= -A^T \Delta y $.}
	 By the expression of $ \partial h(0, t) $ given in \eqref{eq-partial-huber}, we see that there exists $ W\in \partial \Pi_+(\bar x - \sigma(c-A^T\bar y)) $ such that $ V(0, h) = W(h) $ for any $ h\in \mathbb{R}^{mn} $. Then the last equation of linear system  \eqref{eq-lemma-equivalent-1} can be rewritten as $\Delta x - W(\Delta x - \sigma \Delta z)=0$, and the resulting linear system only has the trivial solution since statement 1 holds true. Therefore, $ U $ is nonsingular. Similarly, one can show that statement 2 implies statement 1. The proof is completed.  
\end{proof}

By the above lemma, we can see that the conditions for ensuring the local convergence rate for Algorithm \ref{alg:smoothingNewton} are not stronger than those for the SSN method. {Furthermore, the nonsingularity assumption is equivalent to the primal and dual linear independence constraint qualification (LICQ) and the local Lipschitz homeomorphism of $F(x,y,z)$ near $(\bar x,\bar y,\bar z)$ (see e.g., \cite{kong2011equivalent,chan2008constraint}). These equivalent conditions are quite strong and may not hold generally. Surprisingly, in our numerical experiments, we indeed observe such local superlinear convergence empirically. This is reasonable because the nonsingularity assumption may hold in the selected experiments. Moreover, the nonsingularity assumption is sufficient but may not be necessary for the superlinear convergence. For example, the superlinear convergence of the projected SSN method is established in \cite{hu2021semismooth} under the local smoothness and the error bound conditions on a submanifold, without the nonsingularity assumption.}

\section{Extension to the Wasserstein Barycenter Problem}\label{sec:wbp}

In this section, we proceed to extend our Algorithm \ref{alg:smoothingNewton} to solve the Wasserstein barycenter problem.

\subsection{Problem Statement}

First, we briefly recall the Wasserstein distance and describe the problem of computing a Wasserstein barycenter for a set of discrete probability distributions with finite support points. Given two discrete distributions $\mathcal{P}^{(1)}=\{(a_i^{(1)},\,{q}_i^{(1)}): i = 1, \dots, m_1\}$ and $\mathcal{P}^{(2)}=\{(a_i^{(2)}, {q}_i^{(2)}): i = 1, \dots, m_2\}$, the $p$-Wasserstein distance $\mathcal{W}_p(\mathcal{P}^{(1)}, \mathcal{P}^{(2)})$ is defined by the following OT problem:
\begin{equation*}
\begin{array}{cl}
\left(\mathcal{W}_p\left(\mathcal{P}^{(1)}, \mathcal{P}^{(2)}\right)\right)^p:= &
 \min 
 \limits_{X \in \mathbb{R}^{m_1 \times m_2}} \Big\{
 \left\langle X, \mathcal{D}\left(\mathcal{P}^{(1)}, \mathcal{P}^{(2)}\right)\right\rangle 
 \mid 
 X^{\top} {{e}_{{m}_{{1}}}}={a}^{{(1)}},\; X {{e}_{{m}_{{2}}}}={a}^{{(2)}}, \; X \geq {0} \Big\},
\end{array}
\end{equation*}
where $\mathcal{D}(\mathcal{P}^{(1)}, \mathcal{P}^{(2)}) \in \mathbb{R}^{m_1 \times m_2}$ is the distance matrix with $\mathcal{D}(\mathcal{P}^{(1)}, \mathcal{P}^{(2)})_{i j}=\lVert q_{{i}}^{(1)}-q_j^{(2)}\rVert_p^p$ and $p\geq 1$. Then, given a set of discrete probability distributions $\{\mathcal{P}^{(t)}\}_{t=1}^N$ with $\mathcal{P}^{(t)}=\{(a_i^{(t)},\,{q}_i^{(t)}): 1\leq i \leq n\}$, a $p$-Wasserstein barycenter $\mathcal{P}:=\{(w_i,\,{q}_i): i = 1, \dots, m\}$ with $m$ support points is an optimal solution to the following problem:
\begin{equation}\label{eq-nonconvex-wbp}
\min  \Big\{ 
\sum_{t=1}^N \gamma_t\left(\mathcal{W}_p\left(\mathcal{P}, \mathcal{P}^{(t)}\right)\right)^p \mid  w\in \mathbb{R}^m_+,\; e_m^\top w = 1, \; q_1,\ldots,q_m \in \mathbb{R}^d\Big\}
\end{equation}
for given weights $\left(\gamma_1, \dots, \gamma_N\right)$ satisfying $\sum_{t=1}^N \gamma_t=1$ and $\gamma_t > 0,\; t=1, \dots, N$. {Note that} problem \eqref{eq-nonconvex-wbp} is a non-convex {optimization} problem, in which one needs to find the optimal support ${q}:=\{q_1,\dots,q_m\}$ and the optimal weight vector {${w}:=(w_1,\dots,w_m)$} of a barycenter simultaneously. In many real applications, the support ${q}$ of a barycenter is {pre-specified}. Thus, one only needs to find the weight vector ${w}$ of a barycenter. In view of this, from now on, we assume that the support ${q}$ is given. Consequently, the problem \eqref{eq-nonconvex-wbp} reduces to the WB problem with fixed support points:

\begin{equation}\label{eq-wbp}
\begin{aligned}
&\min\limits_{{w},\,\{\Pi^{(t)}\}}~{\textstyle\sum^N_{t=1}}\langle D^{(t)}, \,\Pi^{(t)} \rangle  \\
&\hspace{0.5cm}\mathrm{s.t.} \hspace{0.5cm} \Pi^{(t)}{e}_{m_t} = {w}, ~(\Pi^{(t)})^{\top}{e}_{m} = {a}^{(t)},~\Pi^{(t)} \geq 0, ~~t = 1, \dots, N, \\
&\hspace{1.5cm} {e}^{\top}_m{w} = 1, ~{w} \geq 0,
\end{aligned} 
\end{equation}
where $D^{(t)}$ denotes $\gamma_t\mathcal{D}(\mathcal{P}, \,\mathcal{P}^{(t)})$ for simplicity, {for $t = 1,\dots,N$}. Here, we assume that the dimensions of sampling distributions $a^{(t)}$ are equal to $n$ for convenience {and the case with different dimensions can be extended in a straightforward manner}. Since the last two constraints ${e}^{\top}_m{w} = 1, ~{w} \geq 0$ are redundant, we remove them and reformulate the problem \eqref{eq-wbp} as the following linear programming:
\begin{equation}\label{eq-wbp-lp}
\begin{array}{cl}
\min \limits_x \big\{  \langle c, x\rangle \;\mid\;
 {A} x={b},\; x \geq 0\big\},
\end{array}
\end{equation}
where 
\begin{itemize}
    \item $x=\left(\operatorname{vec}\left(\Pi^1\right);\dots ; \operatorname{vec}\left(\Pi^N\right) ; {w}\right)\in \mathbb{R}^{Nmn+m}$ ;
    \item $b=\left({a}^{(1)} ; {a}^{(2)} ; \dots ; {a}^{(N)} ; 0_{m} ; \dots ; 0_{m} \right)\in\mathbb{R}^{Nn+Nm}$;
    \item $c=\left(\operatorname{vec}\left(D^{(1)}\right) ; \dots ; \operatorname{vec}\left(D^{(N)}\right) ; 0_m\right)\in \mathbb{R}^{Nmn+m}$ ;
    \item $A=\left(\begin{array}{cc}
    A_1 & 0  \\
    A_2 & A_3 
    \end{array}\right)\in \mathbb{R}^{(Nn+Nm)\times (Nmn+m)}$, \,\,$A_1=\operatorname{Diag}\left(I_{n} \otimes {e}_m^{\top}, \dots, I_{n} \otimes {e}_m^{\top}\right)\in \mathbb{R}^{Nn\times Nmn},$\\
    
     $A_2=\operatorname{Diag}\left({e}_{n}^{\top} \otimes I_{m}, \dots, {e}_{n}^{\top} \otimes I_{m}\right)\in \mathbb{R}^{Nm\times Nmn}$, and $
     A_3=-{e}_N \otimes I_{m}\in \mathbb{R}^{Nm\times m}.$ 
\end{itemize}

\subsection{Newton equations}
Algorithm \ref{alg:smoothingNewton} can be directly {extended} to solve the WB problem \eqref{eq-wbp-lp}. Similar to {the linear system \eqref{eq-smoothing-update-y} for} OT problems, the most expensive step is to tackle a structured system of linear equations w.r.t. $\Delta y$ of the following form:
\begin{equation}\label{eq-wbp-deltay}
\left(\lambda I+A \Theta A^T\right) \Delta y=R,
\end{equation}
where $\lambda = \kappa_p \epsilon^k>0$ is a scalar, and
\begin{equation*}
    \begin{array}{ll}
        \Theta := \left(\left(1+\kappa_c \epsilon^k\right) I-V_2^k\right)^{-1} V_2^k&\in \mathbb{R}^{(Nm n +m) \times(Nm n +m)},\\[2pt]
        
        R:=r_p^k-A\left(\left(1+\kappa_c \epsilon^k\right) I-V_2^k\right)^{-1} r_c^k&\in\mathbb{R}^{Nm+Nn}.
    \end{array}
\end{equation*}
We aim to accelerate the computation by exploiting the special structure of the matrix $A$. {Recall that for the} OT problem, we can leverage the Fact \ref{fact-ADAT} to simplify the coefficient matrix of $\Delta y$ to \eqref{eq-ot-ADAT}. Here, for the WB problem, we use a similar technique to simplify the coefficient matrix in \eqref{eq-wbp-deltay} as follows. First, {we can verify that the matrix $\Theta$ is a nonnegative diagonal matrix.  In particular, $\Theta $ takes the following form:
\begin{align*}
    &\Theta:=\left(\begin{array}{cccc}
    \operatorname{Diag}(\theta^{(1)}) & & &\\
    & \ddots & &\\
    & & \operatorname{Diag}(\theta^{(N)}) & \\
    & & &\operatorname{Diag}(\bar\theta)
    \end{array}\right)
\end{align*}
where $\theta^{(t)} \in \mathbb{R}^{mn}_+$ for $t=1,\ldots,N$ and $\bar\theta\in \mathbb{R}^m_+$. Define $V_t := \mathrm{Mat}(\theta^{(t)})\in\mathbb{R}^{m\times n}$ for $t = 1,\dots, N$}. Then the system of linear equations \eqref{eq-wbp-deltay} can be equivalently written as
\begin{equation}\label{eq-wbp-y1y2}
    (\lambda I+A\Theta A^T) \Delta y=\left(\begin{array}{cc}
    E_1 & E_2 \\[2pt]
    E_2^T & E_3
    \end{array}\right)\left(\begin{array}{l}
    \Delta y_1 \\[2pt]
    \Delta y_2 
    \end{array}\right)=\left(\begin{array}{c}
    R_1 \\[2pt]
    R_2 
    \end{array}\right),
\end{equation}
where $\Delta y:=(\Delta y_1;\Delta y_2)\in \R^{Nn+Nm},R:=(R_1;R_2)\in\R^{Nn+Nm}$, and
\begin{equation*}
    \begin{array}{ll}
     E_1:={\operatorname{Diag}\left(\left(V_1^Te_m; \dots ; V_N^Te_m\right)\right)}+\lambda I&\in\R^{Nn\times Nn},\\[5pt]
     
   E_2:=\operatorname{Diag}\left({V}_1^T, \dots, {V}_N^T\right)&\in\R^{Nn \times Nm},\\[5pt]
   
   E_3:={\Diag\left(\left({V}_1e_n;\dots;{V}_Ne_n\right)\right)}+(e_Ne_N^T)\otimes
   \mathrm{Diag}( {\bar\theta})+\lambda I\quad&\in\R^{{Nm\times Nm}}.
\end{array}
\end{equation*}
To reduce the size of the linear system, we next eliminate $\Delta y_1$ and compute $\Delta y_2$ by
\begin{equation}\label{eq-wbp-y2}
    {S}\Delta y_2 = {R}_3,
\end{equation}
where ${R}_3:=R_2-E_2^TE_1^{-1}R_1\in\R^{Nm}$ and $S:=E_3-E_2^TE_1^{-1}E_2$ is the Schur complement matrix such that
\[
\arraycolsep=1.6pt\def\arraystretch{1.5}
    \begin{array}{lll}
        {S} &:= \Diag\left( {S}_1,\dots,{S}_N \right)
        {+\big(e_N\otimes \mathrm{Diag}(\sqrt{\bar\theta})\big) 
        \big(e_N \otimes \mathrm{Diag}(\sqrt{\bar\theta})\big)^\top}
        &\in\R^{Nm\times Nm},
        \\
        {S}_t&:={\Diag({V}_te_n+{\lambda e_m})
        -{V_t}\Diag\left(V_t^Te_m+{\lambda e_n}\right)^{-1}{V}_t^T} &\in\R^{m\times m},\; {t = 1,\dots, N} .
    \end{array}
\]
It is clear that the coefficient matrix $S$ is sparse and positive definite, so \eqref{eq-wbp-y2} can be efficiently solved directly by sparse Cholesky decomposition or iteratively by preconditioned conjugate gradient (PCG) method. Note that when forming the matrices $S_t$ for $t = 1,\dots, N$, matrix inversions are only applied to diagonal matrices. Therefore, the cost of computing the coefficient matrix is relatively modest. Finally, it follows from \eqref{eq-wbp-y1y2} that
\[
\Delta y_1=E_1^{-1}\left(R_1-E_2 \Delta y_2\right).
\]
We can {also compute $\Delta y_1$ efficiently} thanks to the diagonal block structure of $E_1$ and $E_2$. {To see this, we} denote $\Delta y_1:=(\Delta y_1^1;\dots;\Delta y_1^N)\in \R^{Nn}$, $\Delta y_2:=(\Delta y_2^1;\dots;\Delta y_2^N)\in \R^{Nm}$ and $R_1:=(R_1^1;\dots;R_1^N)\in\R^{Nn}$, then 
\begin{equation*}
    \Delta y_1^t =(R_1^t-{V}_t^T\Delta y_2^t )\oslash ({V}_t^Te_m+\lambda e_n),\quad t=1,\dots,N,
\end{equation*}
where ``$\oslash$'' is the element-wise division operator. Moreover, observe that the coefficient matrix in \eqref{eq-wbp-y2} is the sum of a symmetric positive definite and block-diagonal matrix and a low rank matrix. Thus, one can solve \eqref{eq-wbp-y2} efficiently by using the Sherman-Morrison-Woodbury formula. In our numerical experiments, we iteratively solve the equation \eqref{eq-wbp-y2} using the PCG method with incomplete Cholesky factorization as the preconditioner for the first few steps. We then switch to sparse Cholesky decomposition to solve it directly if the PCG steps exceed 80.

\section{Numerical Experiments}\label{sec:numexp}
In this section, we conduct a set of numerical experiments and present the corresponding computational results to demonstrate the efficiency of the proposed Algorithm \ref{alg:smoothingNewton}.

\subsection{Experimental settings}
We implement our algorithm purely in Julia (version 1.8.2) and compare it with the highly optimized commercial and/or open-source LP solvers including Gurobi (version 9.5.1), HiGHS (version 1.3.0) and CPLEX (version 22.1.0.0) by calling their Julia interfaces through JuMP~\footnote{Julia: \url{https://julialang.org/}; Gurobi.jl: \url{https://github.com/jump-dev/Gurobi.jl}; HiGHS.jl: \url{https://github.com/jump-dev/HiGHS.jl}; CPLEX.jl: \url{https://github.com/jump-dev/CPLEX.jl}; JuMP.jl: \url{https://github.com/jump-dev/JuMP.jl}. For the commercial solvers Gurobi and CPLEX, we use their academic licenses.}. In particular, for these powerful LP solvers, we compare our algorithm with their barrier methods and the network simplex method implemented in CPLEX~\footnote{Based on the benchmark in \cite{schrieber2016dotmark}, the network simplex method and its variants are the most efficient algorithms for solving OT problems to very high accuracy. However, only the network simplex method implemented in CPLEX has a friendly interface in JuMP, to the best of our knowledge. Therefore, we only compare Algorithm \ref{alg:smoothingNewton} with the network simplex method provided by CPLEX.}. Our experiments are run on a Linux PC having Intel Xeon E5-2680 (v3) cores with 96 GB of RAM.

Given an approximate KKT solution $(x,y,z)\in \mathbb{R}^{mn}\times \mathbb{R}^{m+n}\times \mathbb{R}^{mn}$, we define the maximal relative KKT residue as follows
\[
\eta_{\rm max}:=\max\left\{\eta_p:=\frac{\left\lVert Ax - d\right\rVert}{1+\left\lVert d\right\rVert}, \eta_d:=\frac{\left\lVert A^Ty + z-c\right\rVert}{1 + \left\lVert c\right\rVert}, \eta_c:=\frac{\left\lVert x - \Pi_+(x - z)\right\rVert}{1 + \left\lVert x\right\rVert + \left\lVert z\right\rVert}\right\}.
\]
Note that $\eta_d$ is always zero in our case based on our algorithmic design. To measure the duality gap, we also define the following relative gap
\[
	\eta_g:= \frac{\left\lvert \left\langle c, x\right\rangle - \left\langle d, y\right\rangle \right\rvert}{1+ \left\lvert \left\langle c, x\right\rangle \right\rvert + \left\lvert \left\langle d, y\right\rangle \right\rvert}.
\]
For a user-specified stopping tolerance $\tt{tol} > 0$, we terminate the algorithm when the smoothing parameter $\epsilon$ is smaller than $ \tt{tol}\times 10^{-2} $, or $\max\{\eta_{\rm max}, \eta_g\}\leq \tt{tol}$, or the maximum time $\tt{TimeLimit}$, or the maximum number of iterations $\tt{maxIter}$, is reached. In our implementation, we set $\tt{tol} = 10^{-8}$, {$\tt{TimeLimit}=24\,(hrs)$,} and $\tt{maxIter} = 1000$. For barrier methods and the network simplex method, we also set the related stopping tolerance as $\tt{tol}$ and compute the corresponding KKT residues $\eta_{\rm max}$ from the returned approximate solutions. For OT problems, we turn off the crossover phase for all LP solvers, as it is too time-consuming and in general does not improve the quality of the output solutions. On the other hand, we turn on the presolving phase for barrier methods, since it usually enhances the numerical stability and improves the convergence. However, for the network simplex method in CPLEX, we turn off the presolving phase since it consumes a lot of computational time but does not improve the performance of the network simplex method. Finally, we notice that the barrier method implemented in HiGHS only uses one thread when solving the problem. For WB problems, we turn off the presolving phase for the barrier method, since it does not improve the performance in our numerical tests. However, we turn on the presolving phase for dual simplex method since it can dramatically decrease the computational time based on our numerical experience. To ensure fair comparisons, we restrict all methods to use only one thread for OT problems, while allowing multi-threading for WB problems.

We also mention that for the matrix $A$ generated from an OT problem, the last row of $A$ is always redundant. Thus, to improve the practical performance of all the solvers, we always drop the last equality constraint when passing the input data to a solver. Also, we notice that performing a suitable scaling scheme to the problem data can improve the efficiency of Algorithm \ref{alg:smoothingNewton}. Therefore, in our implementation, for given input data $d$ and $c$, Algorithm \ref{alg:smoothingNewton} is executed on $\hat d$ and $\hat c$, where $\hat d = d / \left\lVert d\right\rVert$ and $\hat c = c / \left\lVert c \right\rVert$. However, the relative KKT residues and relative duality gap are evaluated on the original data. Finally, in our implementation, we set $\epsilon^0 = 1$, $r = 0.75$, $\tau = 0.25$, $\rho = 0.5$, $\mu = 10^{-8}$, $\sigma = \min\{10^3, \left\lVert c\right\rVert\}$, $\kappa_p = \kappa_c = 1$.

%%%%%%%%%%%%%%%%%%%%%%%%%%%%%%
\subsection{Computational results on DOTmark {for OT problems}}\label{subsec:DOTmark}

In our experiments, the cost matrix $C$ is chosen based on the squared Euclidean distance. In particular, for $1\leq i\leq m$ and $1\leq j\leq n$, $C_{ij} := (\ell_1 - \ell_2)^2 + (k_1 - k_2)^2$, where the indices $i$ and $j$ correspond to the $(\ell_1, k_1)$ and $(\ell_2, k_2)$ positions in the two images, respectively. Since $C$ contains some elements that are very large, we also normalize $ C $ by dividing it by its maximum value. The discrete probability distributions $a$ and $b$ are obtained from normalizing any two images. We consider images from the DOTmark collection \cite{schrieber2016dotmark} (a benchmark dataset for discrete OT problems) which contains ten classes of images arising from different scenarios. See Figure \ref{fig:dotmark} for examples of images from each class. Within a specific class that contains ten different images, we can pair any two different images and compute the optimal transport between them. Thus, each class of images generates 45 OT problems, resulting in 450 problems for ten classes. We mention that the DOTmark collection offers images at different resolutions, ranging from $32\times 32$ to $512\times 512$. However, due to limited available memory, we only present the results for the cases with $32\times 32$, $64\times 64$ resolutions and partial results for the cases with $ 128\times 128 $ resolution. As a consequence, $450 + 450 + 10 = 910$ OT problems are tested in our experiments. Table \ref{tab:sizes} summarizes the problem sizes with respect to different image resolutions. We see that for OT problems generated from $128 \times 128$ images, the problem sizes are typically too large to be handled by those LP solvers used on our machine.

\begin{figure}[H]
     \centering
     \begin{subfigure}[t]{0.15\textwidth}
         \centering
         \includegraphics[width=\textwidth]{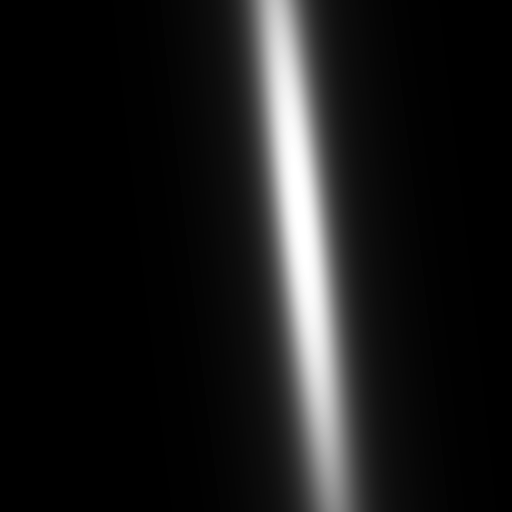}
         \caption{Cauchy}
         \label{fig:CauchyDensity}
     \end{subfigure}
     \begin{subfigure}[t]{0.15\textwidth}
         \centering
         \includegraphics[width=\textwidth]{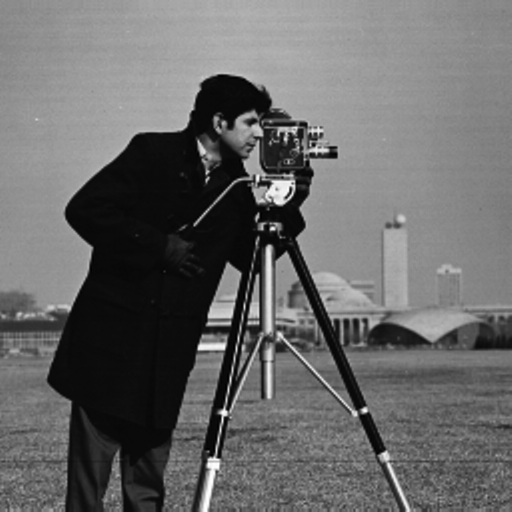}
         \caption{Classic}
         \label{fig:ClassicImages}
     \end{subfigure}
     \begin{subfigure}[t]{0.15\textwidth}
         \centering
         \includegraphics[width=\textwidth]{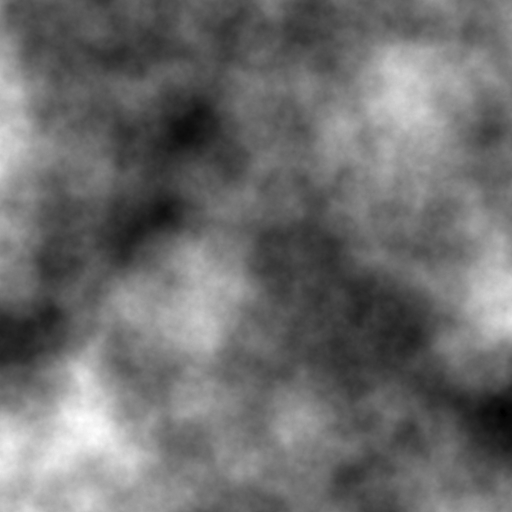}
         \caption{GRFm}
         \label{fig:GRFmoderate}
     \end{subfigure}
     \begin{subfigure}[t]{0.15\textwidth}
         \centering
         \includegraphics[width=\textwidth]{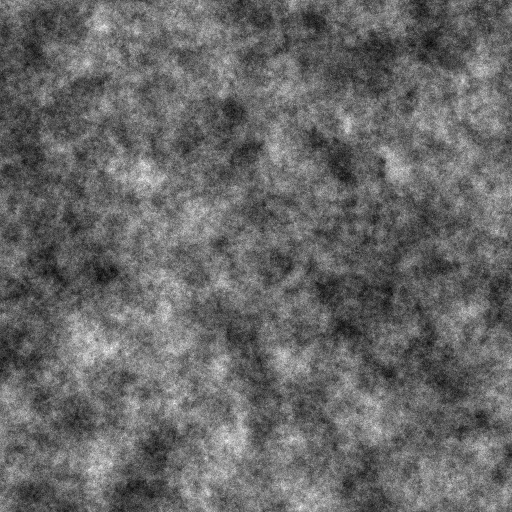}
         \caption{GRFr}
         \label{fig:GRFrough}
     \end{subfigure}
     \begin{subfigure}[t]{0.15\textwidth}
         \centering
         \includegraphics[width=\textwidth]{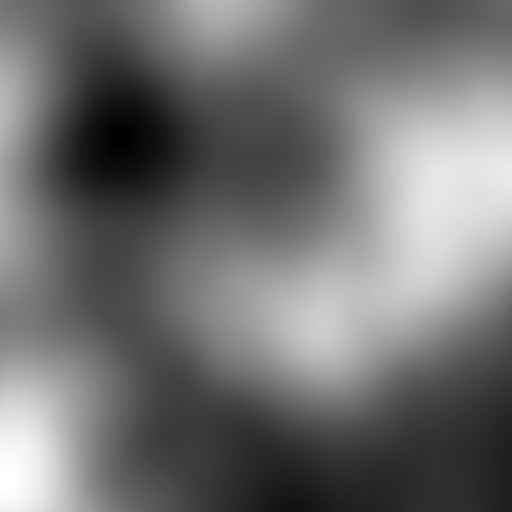}
         \caption{GRFs}
         \label{fig:GRFsmooth}
     \end{subfigure}
     
    \bigskip
   \begin{subfigure}[t]{0.15\textwidth}
         \centering
         \includegraphics[width=\textwidth]{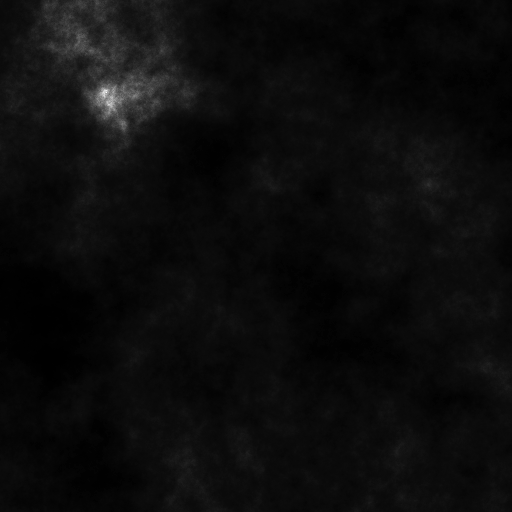}
         \caption{LogGRF}
         \label{fig:LogGRF}
     \end{subfigure}
     \begin{subfigure}[t]{0.15\textwidth}
         \centering
         \includegraphics[width=\textwidth]{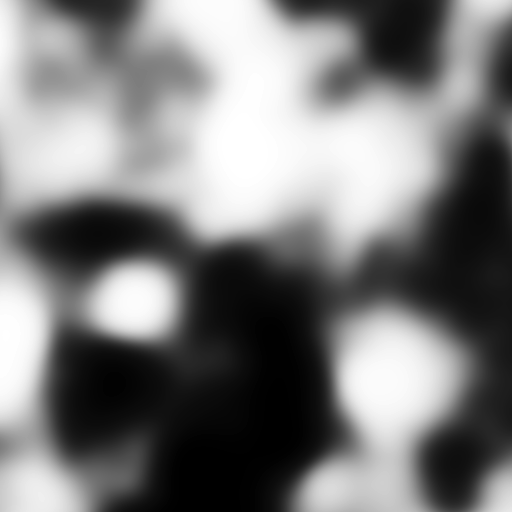}
         \caption{LogitGRF}
         \label{fig:LogitGRF}
     \end{subfigure}
     \begin{subfigure}[t]{0.15\textwidth}
         \centering
         \includegraphics[width=\textwidth]{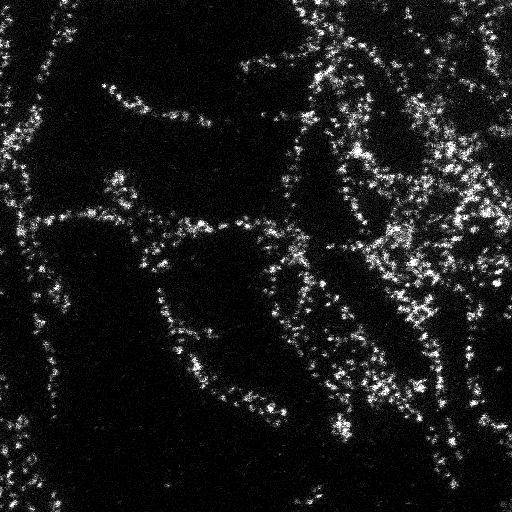}
         \caption{Microscopy}
         \label{fig:MicroscopyImages}
     \end{subfigure}
     \begin{subfigure}[t]{0.15\textwidth}
         \centering
         \includegraphics[width=\textwidth]{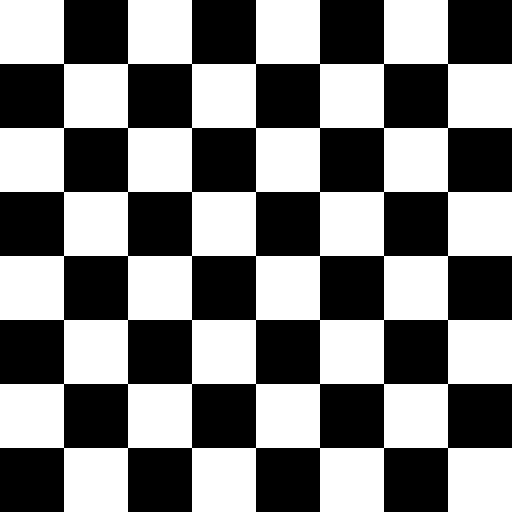}
         \caption{Shapes}
         \label{fig:Shapes}
     \end{subfigure}
     \begin{subfigure}[t]{0.15\textwidth}
         \centering
         \includegraphics[width=\textwidth]{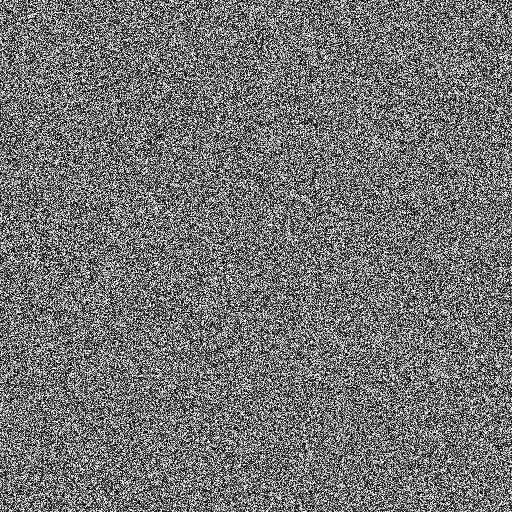}
         \caption{WhiteNoise}
         \label{fig:WhiteNoise}
     \end{subfigure}
    \caption{Example images from the DOTmark collection \cite{schrieber2016dotmark}.}
    \label{fig:dotmark}
\end{figure}

\begin{table}[H]
	\centering
	\begin{tabular}{|l|l|l|}
		\hline
		Resolution       & \#constraints & \#variables \\ \hline
		$32 \times 32$   & 2048         & 1,048,576    \\ \hline
		$64 \times 64$   & 8192         & 16,777,216   \\ \hline
		$128 \times 128$ & 32768        & 268,435,456  \\ \hline
	\end{tabular}
	\caption{OT problem sizes with different images resolutions.}
	\label{tab:sizes}
\end{table}

Notice that for most images from the ``Shapes'' and ``MicroscopyImages'' classes, the generated probability distributions may contain zero entries. In particular, if the $i$-th ($1\leq i\leq m$) element in $a$ is zero, then we can easily see that the $i$-th row of the optimal transportation plan $X$ is a zero vector. Similarly, if the $j$-th ($1\leq j\leq n$) element in $b$ is zero, then the $j$-th column of the optimal transportation plan $X$ is a zero vector. Therefore, one is able to drop those zero rows and/or columns in a prepossessing phase to get a smaller-scale OT problem.  As a consequence, we always solve the smaller OT problems. 

The detailed computational results for the instances with $64\times 64$ resolution are presented in Table~\ref{tab:img64}. (For brevity, we do not present the results for the instances with $32\times 32$ resolution but note that the relative performance of various methods are similar to those observed in Table~\ref{tab:img64}.) In the table, the barrier method and the network simplex method in CPLEX are denoted by ``CPLEX-Bar'' and ``CPLEX-Net'', respectively. For each image class, we present the mean values over 45 OT problems. We report the mean value of the number of iterations taken by each solver in the column ``Iter''. Note that CPLEX-Net performs the network simplex iterations, which are usually very large. Moreover, currently we do not know how to extract the total number of the network simplex iterations through JuMP. Therefore, we just set Iter to be '-' for CPLEX-Net. In the column ``Time'', the mean values of computational times for all the solvers are presented for comparing their practical efficiency. For the remaining four columns, we report the mean relative KKT residues (i.e., $ \eta_p $, $ \eta_d $, $ \eta_c $ and $ \eta_g $) in order to compare the accuracy of the solutions.

From the computational results, we observe that all the solvers except HiGHS were able to solve all the OT problems accurately. Indeed, the solutions provided by HiGHS are usually less accurate than the other solvers. Among all the solvers, CPLEX-Net showed the best performance, which coincides with the results presented in \cite{schrieber2016dotmark}. For the comparison between barrier methods, we observe that CPLEX-Bar and Gurobi had comparable performance. On the other hand, we see that HiGHS took more iterations and computational time than those of CPLEX-Bar and Gurobi. While Algorithm \ref{alg:smoothingNewton} took more iterations than those of the barrier methods, it is about 2--4 times faster than CPLEX-Bar and Gurobi. This indicates that Algorithm \ref{alg:smoothingNewton} is more efficient in terms of per-iteration cost when compared to barrier methods. This is exactly the consequence of the exploration of the solution sparsity.   

%%%%%%%%%%%%%%
\begin{footnotesize}
\begin{longtable}[c]{llrrrrrr}
\caption{Computational results for $64\times 64$ images. \label{tab:img64}} \\
\toprule
           Image &       Solver &  Time (s)&    Iter &  $ \eta_p $ &  $ \eta_d $ &   $ \eta_c $ &  $ \eta_g $ \\
\midrule
\endfirsthead

\multicolumn{8}{c}%
{{ Table \thetable\ continued from previous page}} \\
\toprule
           Image &       Solver &  Time (s) &    Iter &  $ \eta_p $ &  $ \eta_d $ &   $ \eta_c $ &  $ \eta_g $ \\
\midrule
\endhead
\midrule
\multicolumn{8}{r}{{Continued on next page}} \\
\midrule
\endfoot

\bottomrule
\endlastfoot
   \multirow{1}{*}{CauchyDensity} &        HiGHS &  5.0e+02 & 29 &  2.3e-06 &  8.2e-10 & 2.1e-10 & 1.8e-06 \\
   &       Gurobi &  3.0e+02 & 16 &  5.7e-11 &  1.6e-16 & 2.7e-09 & 2.2e-09 \\
   &    CPLEX-Bar &  6.0e+02 & 26 &  1.5e-10 &  3.4e-16 & 1.3e-10 & 1.5e-09 \\
   &    CPLEX-Net &  3.6e+01 & -  &  1.3e-18 &  6.2e-17 & 0.0e+00 & 7.6e-18 \\
   & SmoothNewton &  7.8e+01 & 57 &  3.1e-12 &  0.0e+00 & 3.9e-09 & 4.9e-09 \\[5pt]
   \multirow{1}{*}{GRFmoderate} &        HiGHS &  4.4e+02 & 27 &  1.1e-02 &  9.0e-02 & 8.4e-10 & 1.7e-02 \\
   &       Gurobi &  2.8e+02 & 15 &  2.5e-12 &  1.5e-16 & 1.6e-09 & 1.8e-09 \\
   &    CPLEX-Bar &  5.9e+02 & 25 &  5.2e-10 &  3.1e-15 & 3.4e-10 & 5.8e-09 \\
   &    CPLEX-Net &  3.3e+01 & -  &  6.8e-19 &  4.6e-17 & 0.0e+00 & 2.6e-18 \\
   & SmoothNewton &  7.1e+01 & 54 &  1.3e-12 &  0.0e+00 & 3.2e-09 & 4.9e-09 \\[5pt]
   \multirow{1}{*}{GRFsmooth} &        HiGHS &  4.6e+02 & 28 &  1.8e-06 &  1.5e-12 & 7.5e-10 & 1.1e-06 \\
   &       Gurobi &  3.0e+02 & 16 &  1.3e-11 &  1.6e-16 & 2.8e-09 & 2.9e-09 \\
   &    CPLEX-Bar &  6.2e+02 & 27 &  4.4e-10 &  3.0e-15 & 4.6e-10 & 4.8e-09 \\
   &    CPLEX-Net &  3.8e+01 & -  &  7.5e-19 &  5.6e-17 & 0.0e+00 & 3.1e-18 \\
   & SmoothNewton &  7.8e+01 & 58 &  1.9e-12 & 0.0e+00  & 3.0e-09 & 5.0e-09 \\[5pt]
   \multirow{1}{*}{LogitGRF} &        HiGHS &  5.2e+02 & 30 &  1.8e-05 &  1.2e-12 & 6.4e-10 & 2.2e-05 \\
   &       Gurobi &  3.0e+02 & 16 &  2.7e-12 &  1.5e-16 & 2.4e-09 & 2.6e-09 \\
   &    CPLEX-Bar &  6.8e+02 & 30 &  3.6e-10 &  2.4e-15 & 4.1e-10 & 2.9e-09 \\
   &    CPLEX-Net &  3.5e+01 & -  &  9.1e-19 &  5.0e-17 & 0.0e+00 & 2.9e-18 \\
   & SmoothNewton &  7.7e+01 & 59 &  1.3e-12 &  0.0e+00 & 3.3e-09 & 5.1e-09 \\[5pt]
   \multirow{1}{*}{Shapes} &        HiGHS &  9.2e+01 & 21 & 6.4e-07 & 1.6e-09 & 3.1e-10 & 7.0e-08 \\
   &       Gurobi &  6.2e+01 & 13 &  1.7e-11 &  9.2e-15 & 4.2e-10 & 1.8e-09 \\
   &    CPLEX-Bar &  1.0e+02 & 20 &  7.8e-10 &  9.6e-16 & 2.4e-10 & 4.5e-09 \\
   &    CPLEX-Net &  7.5e+00 & -  &  3.3e-18 &  3.8e-17 & 0.0e+00 & 8.2e-18 \\
   & SmoothNewton &  2.1e+01 & 52 &  6.3e-11 &  0.0e+00 & 8.8e-09 & 3.6e-09 \\[5pt]
   \multirow{1}{*}{ClassicImages} &        HiGHS &  5.1e+02 & 30 &  1.4e-06 &  1.0e-09 & 2.2e-10 & 2.5e-08 \\
   &       Gurobi &  3.0e+02 & 15 &  2.6e-12 &  1.4e-16 & 3.2e-09 & 2.5e-09 \\
   &    CPLEX-Bar &  5.9e+02 & 25 &  2.3e-10 &  2.0e-15 & 1.7e-10 & 2.4e-09 \\
   &    CPLEX-Net &  3.5e+01 & -  &  7.6e-19 &  4.1e-17 & 0.0e+00 & 2.0e-18 \\
   & SmoothNewton &  7.1e+01 & 55 &  1.2e-12 &  0.0e+00 & 3.2e-09 & 5.0e-09 \\[5pt]
   \multirow{1}{*}{GRFrough} &        HiGHS &  4.9e+02 & 29 &  5.5e-04 &  2.0e-02 & 5.6e-10 & 6.1e-04 \\
   &       Gurobi &  2.9e+02 & 15 &  1.5e-12 &  1.4e-16 & 1.8e-09 & 2.2e-09 \\
   &    CPLEX-Bar &  5.6e+02 & 24 &  2.5e-10 &  6.1e-15 & 5.0e-10 & 3.5e-09 \\
   &    CPLEX-Net &  3.4e+01 & -  &  7.3e-19 &  2.5e-17 & 0.0e+00 & 5.7e-19 \\
   & SmoothNewton &  7.4e+01 & 56 &  8.8e-13 &  0.0e+00 & 2.7e-09 & 4.9e-09 \\[5pt]
   \multirow{1}{*}{LogGRF}&        HiGHS &  9.1e+02 & 47 &  2.1e-03 &  5.5e-02 & 8.9e-10 & 5.3e-03 \\
   &       Gurobi &  3.1e+02 & 16 &  3.0e-12 &  1.7e-16 & 1.5e-09 & 2.3e-09 \\
   &    CPLEX-Bar &  5.9e+02 & 25 &  4.7e-10 &  1.9e-14 & 5.4e-10 & 6.1e-09 \\
   &    CPLEX-Net &  4.4e+01 & -  &  1.6e-18 &  6.3e-17 & 0.0e+00 & 8.7e-18 \\
   & SmoothNewton &  7.6e+01 & 58 &  3.2e-12 &  0.0e+00 & 3.8e-09 & 5.1e-09 \\[5pt]
   \multirow{1}{*}{MicroscopyImages} &        HiGHS &  1.1e+02 & 40 & 1.1e-06 & 3.0e-09 & 3.6e-10 & 8.7e-08 \\
   &       Gurobi &  3.7e+01 & 15 &  3.6e-12 &  1.5e-16 & 5.3e-09 & 1.2e-09 \\
   &    CPLEX-Bar &  6.1e+01 & 25 &  6.6e-10 &  1.1e-14 & 4.6e-10 & 3.4e-09 \\
   &    CPLEX-Net &  4.1e+00 & -  &  3.6e-18 &  5.1e-17 & 0.0e+00 & 6.4e-18\\
   & SmoothNewton &  9.8e+00 & 46 &  1.2e-10 &  0.0e+00 & 9.1e-09 & 2.0e-09 \\[5pt]
   \multirow{1}{*}{WhiteNoise}&        HiGHS &  4.8e+02 & 27 &  1.0e-02 &  9.5e-02 & 5.0e-10 & 6.4e-03 \\
   &       Gurobi &  3.0e+02 & 16 &  8.4e-13 &  1.4e-16 & 1.8e-09 & 1.9e-09 \\
   &    CPLEX-Bar &  6.1e+02 & 27 &  3.4e-10 &  4.3e-15 & 5.2e-10 & 4.5e-09 \\
   &    CPLEX-Net &  3.5e+01 & -  &  1.1e-18 &  1.6e-17 & 0.0e+00 & 2.8e-19 \\
   & SmoothNewton &  6.9e+01 & 56 &  7.9e-13 &  0.0e+00 & 3.3e-09 & 5.1e-09 \\
\end{longtable}

\end{footnotesize}

We also present some computational results for images with $128\times 128$ resolution. Since solving each OT problem of such a huge size is very time-consuming, we only conduct numerical experiments on the first two images in each class. Thus, only partial results are presented in Table \ref{tab:img128}. Note also that except Algorithm \ref{alg:smoothingNewton}, all other solvers usually take more than 96 GB of RAM to handle these OT problems, so they usually encounter out-of-memory issues. Hence, only the numerical results for Algorithm \ref{alg:smoothingNewton} are presented in Table~\ref{tab:img128}. We can see that the accuracy of the computed solutions is still very high. Moreover, it only took less than 40 minutes to converge, even though each problem contains more than 268 million nonnegative variables. This indicates that Algorithm \ref{alg:smoothingNewton} is quite efficient and robust. More importantly, it consumes less memory than state-of-the-art solvers. 

\begin{footnotesize}
\begin{longtable}[c]{lrrrrrr}
\caption{Partial computational results for Algorithm \ref{alg:smoothingNewton} on $128\times 128$ images. \label{tab:img128}}\\
\toprule
           Image &         Time (s) &    Iter &  $ \eta_p $ &  $ \eta_d $ &   $ \eta_c $ &  $ \eta_g $ \\
\midrule
\endfirsthead

\multicolumn{7}{c}%
{{ Table \thetable\ continued from previous page}} \\
\toprule
           Image &        CpuTime &    Iter &  $ \eta_p $ &  $ \eta_d $ &   $ \eta_c $ &  $ \eta_g $ \\
\midrule
\endhead
\midrule
\multicolumn{7}{r}{{Continued on next page}} \\
\midrule
\endfoot

\bottomrule
\endlastfoot 
		{CauchyDensity}    & 2.21e+03 &  96  & 1.1e-12 & 0.0e+00 & 2.8e-09 & 3.4e-09 \\[3pt] 
		{GRFmoderate}      & 2.17e+03 &  97  & 6.8e-13 & 0.0e+00 & 4.0e-10 & 5.4e-09 \\[3pt]  
		{GRFsmooth}        & 2.18e+03 &  93  & 8.7e-13 & 0.0e+00 & 3.7e-09 & 5.2e-09 \\[3pt]  
		{LogitGRF}         & 1.85e+03 &  86  & 8.0e-13 & 0.0e+00 & 4.1e-10 & 5.6e-09 \\[3pt] 
		{Shapes}           & 4.92e+02 &  48  & 3.1e-12 & 0.0e+00 & 2.6e-09 & 4.1e-09 \\[3pt] 
		{ClassicImages}    & 2.22e+03 &  101 & 6.4e-13 & 0.0e+00 & 4.3e-10 & 5.7e-09 \\[3pt]
		{GRFrough}         & 2.30e+03 &  97  & 7.6e-13 & 0.0e+00 & 4.4e-10 & 5.8e-09 \\[3pt]
		{LogGRF}           & 2.28e+03 &  108 & 9.2e-13 & 0.0e+00 & 4.1e-10 & 5.5e-09 \\[3pt]
		{MicroscopyImages} & 5.74e+02 &  88  & 1.4e-12 & 0.0e+00 & 1.2e-09 & 5.4e-09 \\[3pt]
		{WhiteNoise}       & 2.21e+03 &  99  & 8.0e-13 & 0.0e+00 & 4.4e-10 & 5.3e-09 \\
\end{longtable}
\end{footnotesize}

To further evaluate the memory consumption for each solver, we also measure the peak memory used by each solver when solving the problems. We use the program ``/usr/bin/time'' provided by the Linux system for our measurements. The output of the ``time'' program contains the term ``maximum resident set size'' (maximum RSS) which provides a reasonable estimate of the peak memory usage. Since the ``time'' program takes significant time for generating its output, we only measure the memory usage of each solver on the OT problems derived from the first two images in each image class. The results on memory usage are summarized in Table~\ref{tab:rss}. We can see that Algorithm \ref{alg:smoothingNewton} indeed used much less memory than the other solvers. This further shows the potential of Algorithm \ref{alg:smoothingNewton} for solving OT problems at scale. 

\begin{footnotesize}
\begin{longtable}[c]{llrrrrr}
\caption{Maximum RSS (MB) for the first two images in each class.}
\label{tab:rss}\\
\toprule
Image                          & Resolution     & HiGHS & Gurobi & CPLEX-Bar & CPLEX-Net & SmoothNewton \\ 
\midrule
\endfirsthead

\multicolumn{7}{c}%
{{ Table \thetable\ continued from previous page}} \\
\toprule
Image                          & Resolution     & HiGHS & Gurobi & CPLEX-Bar & CPLEX-Net & SmoothNewton \\ 
\midrule
\endhead
\midrule
\multicolumn{7}{r}{{Continued on next page}} \\
\midrule
\endfoot

\bottomrule
\endlastfoot 
\multirow{1}{*}{CauchyDensity} 
%& $32 \times 32$ &  1614     & 2005       &  1497         & 1620          &  726            \\  
                               & $64 \times 64$ & 22282      &   22604     &    19489       &  20224         &  3456            \\[0pt]
\multirow{1}{*}{GRFmoderate} 
%& $32 \times 32$ &  1615     &  1920      &   1556       &   1489        &   736           \\  
                               & $64 \times 64$ & 22281      & 22503      &  19500        &  20227         & 3751             \\[0pt]
\multirow{1}{*}{GRFsmooth} 
%& $32 \times 32$ & 1631      & 1921       &   1513        &   1527   & 726             \\  
                               & $64 \times 64$ & 22281      & 24152       &   19486        & 20225 &     3550         \\[0pt]
\multirow{1}{*}{LogitGRF} 
%& $32 \times 32$ & 1628      & 1921       &  1557         &  1527         &  741            \\ 
                               & $64 \times 64$ & 22331      & 21639       &  19492         & 20217          &   3850           \\[0pt]
\multirow{1}{*}{Shapes}
% & $32 \times 32$ &  1000     &  1089      &    1098       &  1054         &  647            \\  
                               & $64 \times 64$ & 7524      &  9236      &  9515         & 8924          &    2254          \\[0pt]
\multirow{1}{*}{ClassicImages} 
%& $32 \times 32$ & 1613      & 1926       & 1564          &  1530         & 735             \\  
                               & $64 \times 64$ & 22282      &  21668      &   19470        & 20228          & 3744             \\[0pt]
\multirow{1}{*}{GRFrough} 
%& $32 \times 32$ & 1616      & 1925       & 1559          &   1527      &   750           \\   
                               & $64 \times 64$ & 22281      &  24155      &   19499        &  20237         & 3541             \\[0pt]
\multirow{1}{*}{LogGRF} 
%& $32 \times 32$ & 1890      &  1923      & 1594          &   1604        &   769           \\   
                               & $64 \times 64$ &  22330     &  24158      &  19491         & 20226          &     3803         \\[0pt]
\multirow{1}{*}{MicroscopyImages} 
%& $32 \times 32$ &   1096    &  1396      &   1163        &    1116       &  654            \\   
                               & $64 \times 64$ &  2632     &  3049      & 2305          & 2471           &  919            \\[0pt]
\multirow{1}{*}{WhiteNoise} 
%& $32 \times 32$ &  1617     & 1926       &   1516        & 1543          &   723          \\   
                               & $64 \times 64$ & 22282      & 24158       &  19489         &  20229       &  3572       
                               \\[0pt]    
\end{longtable}
\end{footnotesize}

%%%%%%%%%%%%%%%%%%%%%%%%%%%%%
\subsection{Computational results on real data for WB problems}

In this subsection, we test our Algorithm \ref{alg:smoothingNewton} for computing WB of real image data. For a given set of images, we first rescale them to a pre-specified resolution, and vectorize and normalize them so that the sum of the resulting vectors are all ones. We generate the distance matrix $D^{(t)}$ for all $t=1,\dots N$ in the same manner as before. In our experiments, we set $m = n$ so that the barycenter has the same dimension as the
given images.

We conduct experiments on the MNIST dataset \cite{lecun1998gradient}, the Coil20 dataset \cite{nene1996columbia} and the Yale-Face dataset \cite{belhumeur1997eigenfaces}. For the MNIST dataset, we randomly select 10 images for each digit $(0,\dots,9)$ with the original resolution $28\times 28$. For the Coil20 dataset, we select 3 representative objects: Car, Duck and Pig. For each object, we choose 10 images and rescale them to $32\times 32$. For the Yale-Face dataset, we select three human subjects: Yale01, Yale02 and Yale03. For each subject, we randomly choose 10 images and rescale them to $30\times 40,36\times 48$ and $54\times 72$, respectively. See Figure \ref{fig:wb-examples} for an example of images from each class. A summary of the problem sizes is shown in Table \ref{tab:WB-sizes}. We set the weight $\gamma_t=1/N$ for all $t=1,\dots N$ and $p=2$. 

\begin{figure}[H]
    \centering
    \begin{subfigure}[t]{.15\textwidth}
        \includegraphics[width=\textwidth]{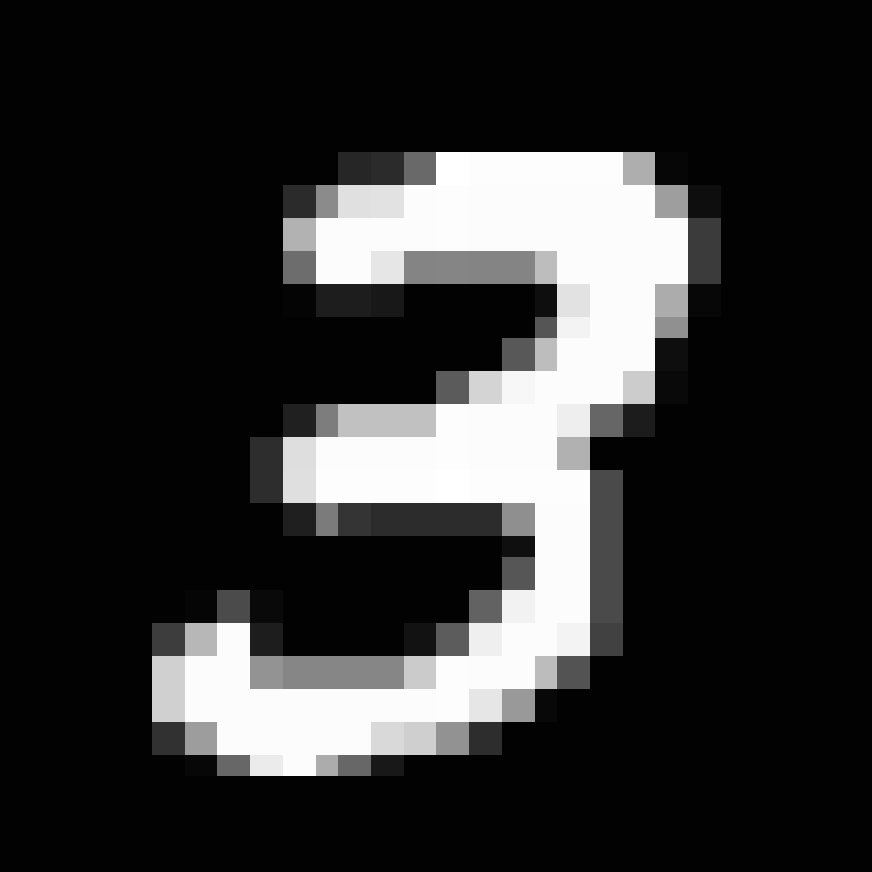}
        \caption{MNIST}
    \end{subfigure}
    \begin{subfigure}[t]{.15\textwidth}
        \includegraphics[width=\textwidth]{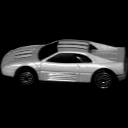}
        \caption{Car}
    \end{subfigure}
    \begin{subfigure}[t]{.15\textwidth}
        \includegraphics[width=\textwidth]{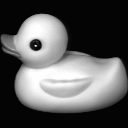}
        \caption{Duck}
    \end{subfigure}
    \begin{subfigure}[t]{.15\textwidth}
        \includegraphics[width=\textwidth]{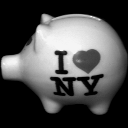}
        \caption{Pig}
    \end{subfigure}

    \bigskip
    \begin{subfigure}[t]{.2025\textwidth}
        \includegraphics[width=\textwidth]{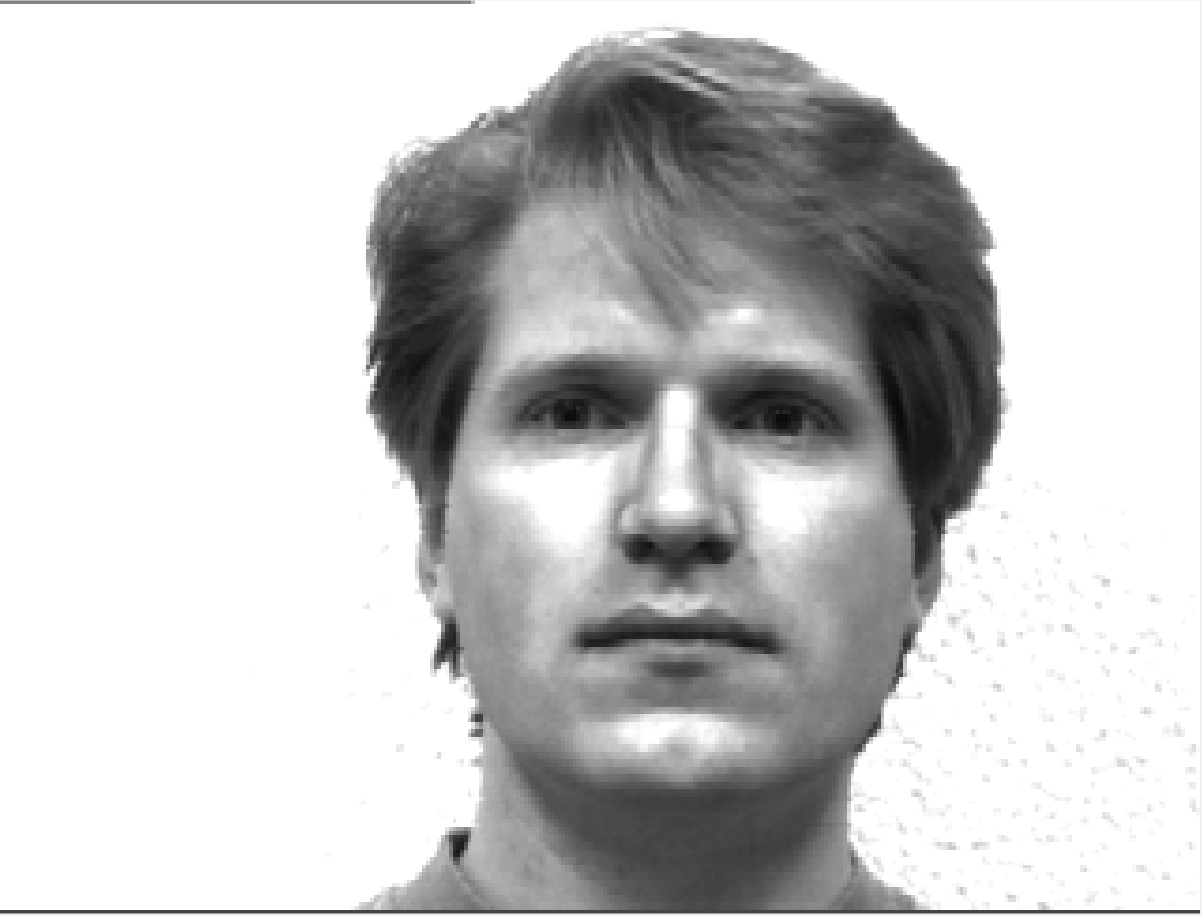}
        \caption{Yale01}
    \end{subfigure}
    \begin{subfigure}[t]{.2025\textwidth}
        \includegraphics[width=\textwidth]{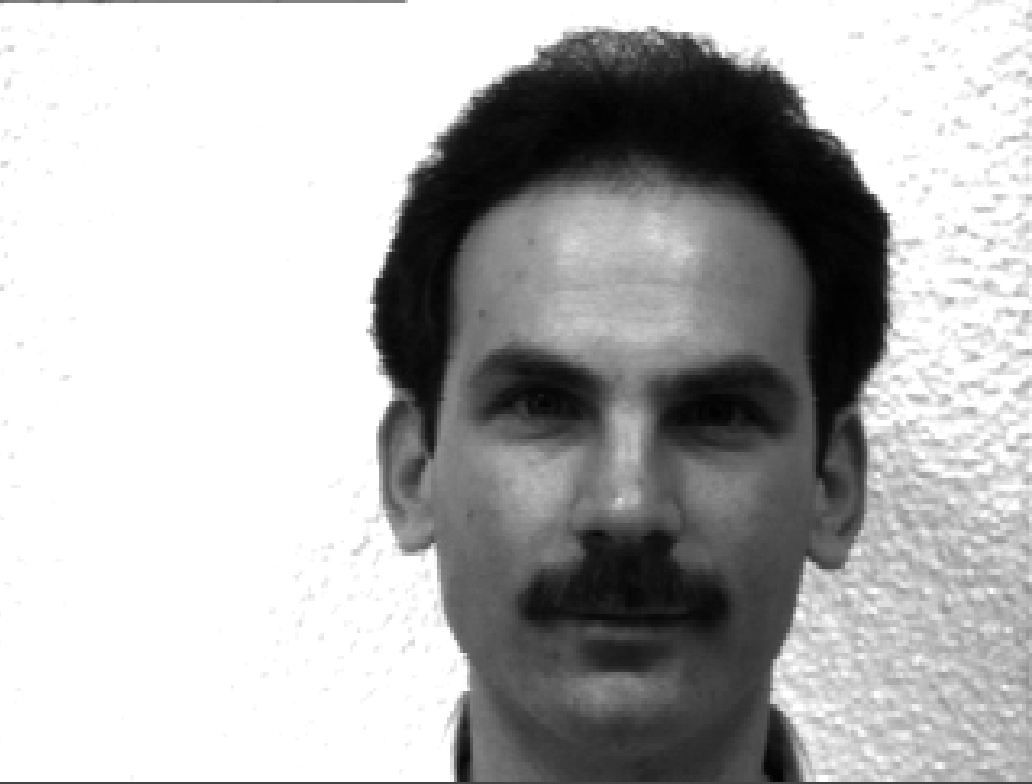}
        \caption{Yale02}
    \end{subfigure}
    \begin{subfigure}[t]{.2025\textwidth}
        \includegraphics[width=\textwidth]{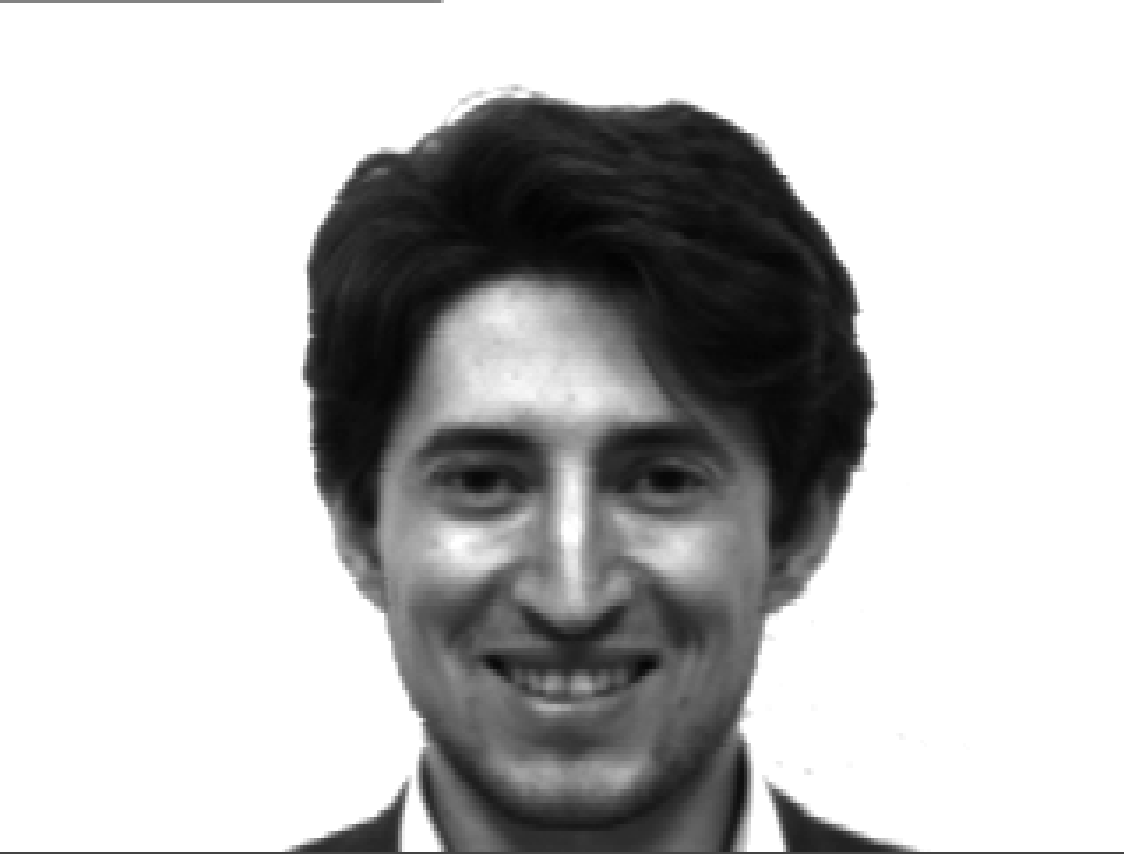}
        \caption{Yale03}
    \end{subfigure}
    \caption{Example images from different databases.}
    \label{fig:wb-examples}
\end{figure}

\begin{table}[H]
	\centering
	\begin{tabular}{|l|l|l|l|l|}
		\hline
		Database  & N  & Resolution  &\#constraints  & \#variables \\ \hline
		  MNIST     & 10 & $28 \times 28$  & 15680  & 6,147,344   \\ \hline
		  Car       & 10 & $32 \times 32$  & 20480  & 10,486,784  \\ \hline
		  Duck      & 10 & $32 \times 32$  & 20480  & 10,486,784  \\ \hline
            Pig       & 10 & $32 \times 32$  & 20480  & 10,486,784  \\ \hline
            Yale01    & 10 & $30 \times 40$  & 24000  & 14,401,200  \\ \hline
            Yale02    & 10 & $36 \times 48$  & 34560  & 29,861,568  \\ \hline
            Yale03    & 10 & $54 \times 72$  & 77760  & 151,169,328  \\ \hline
	\end{tabular}
	\caption{WB problem sizes with different image resolutions.}
	\label{tab:WB-sizes}
\end{table}

The results are summarized in Table \ref{tab:WBreal}. From the computational results, we observe that SmoothNewton and Gurobi-Bar are able to solve all the WB problems on image data successfully. However, Gurobi-Spx cannot solve {the largest} Yale-face problem within the 24-hour limit, since the corresponding LP is of very large size. On larger problems, Gurobi-Bar is faster than Gurobi-Spx. But SmoothNewton outperforms both Gurobi-Spx and Gurobi-Bar on all the image WB problems. For the WB instances on the Duck, Pig, and Yale-Face images, SmoothNewton is about ten times faster than both Gurobi-Spx and Gurobi-Bar.

\begin{footnotesize}
\begin{longtable}[c]{llrrrrrr}
\caption{Computational results on real data for {WB} problems. \label{tab:WBreal}} \\
\toprule
           Image & Solver &  Time(s) &  Iter & $\eta_p$ & $\eta_d$ & $\eta_c$ & $\eta_g$ \\
\midrule
\endfirsthead

\multicolumn{8}{c}%
{{ Table \thetable\ continued from previous page}} \\
\toprule
           Image & Solver &  Time(s) &  Iter & $\eta_p$ & $\eta_d$ & $\eta_c$ & $\eta_g$ \\
\midrule
\endhead
\midrule
\multicolumn{8}{r}{{Continued on next page}} \\
\midrule
\endfoot

\bottomrule
\endlastfoot
   \multirow{1}{*}
MNIST & Gurobi-Spx & 4.9e+01 & 103070 & 2.8e-16 & 8.1e-17 & 7.3e-19 & 3.5e-18 \\
   & Gurobi-Bar & 1.1e+02 & 30 & 5.0e-11 & 5.1e-15 & 6.1e-11 & 1.2e-09 \\
    & SmoothNewton & 1.3e+01 & 91 & 1.3e-13 & 0.0e+00 & 2.1e-10 & 8.2e-09 \\[2pt]

 \multirow{1}{*}
 Car & Gurobi-Spx & 1.2e+02 & 38608 & 1.6e-16 & 3.0e-17 & 2.0e-19 & 1.7e-18 \\
    & Gurobi-Bar & 2.2e+02 & 33 & 8.6e-12 & 8.4e-14 & 6.7e-13 & 6.1e-11 \\
    & SmoothNewton & 2.6e+01 & 111 & 6.2e-14 & 0.0e+00 & 1.6e-11 & 6.2e-10 \\[2pt]
 
 \multirow{1}{*}
 Duck & Gurobi-Spx & 3.2e+02 & 536738 & 2.6e-16 & 8.2e-17 & 1.8e-11 & 5.1e-18\\
    & Gurobi-Bar & 2.6e+02 & 43 & 3.0e-11 & 4.0e-15 & 1.6e-13 & 2.5e-12 \\
    & SmoothNewton & 2.6e+01 & 100 & 4.5e-13 & 0.0e+00 & 2.7e-10 & 9.1e-09\\[2pt] 
    
 \multirow{1}{*}
 Pig & Gurobi-Spx & 5.5e+02 & 1306882 & 1.6e-16 & 8.0e-17 & 4.2e-18 & 1.7e-17 \\
    & Gurobi-Bar & 2.4e+02 & 38 & 3.8e-11 & 5.2e-14 & 4.4e-12 & 7.4e-09 \\
    & SmoothNewton & 2.3e+01 & 96 & 1.2e-12 & 0.0e+00 & 1.8e-10 & 8.9e-09 \\[2pt]

 \multirow{1}{*}
 Yale01 & Gurobi-Spx & 3.7e+03 & 6126017 & 1.9e-16 & 8.7e-17 & 3.1e-17 & 1.9e-17 \\
    & Gurobi-Bar & 3.0e+02 & 32 & 3.6e-11 & 1.1e-15 & 4.1e-10 & 8.3e-09 \\
    & SmoothNewton & 3.1e+01 & 97 & 7.0e-13 & 0.0e+00 & 1.6e-10 & 6.6e-09 \\[2pt] 

 \multirow{1}{*}
 Yale02 & {Gurobi-Spx} & {2.0e+04} & 18892197 & 2.6e-16 & 8.5e-17 & 1.9e-11 & 2.5e-17 \\
    & Gurobi-Bar & {2.1e+03} & 45 & 1.4e-10 & 1.3e-13 & 2.2e-11 & 5.3e-09 \\
    & SmoothNewton & {1.8e+02} & 115 & 1.0e-12 & 0.0e+00 & 1.7e-10 & 9.7e-09 \\[2pt] 
    
 \multirow{1}{*}
 Yale03 & {Gurobi-Spx} & \multicolumn{2}{c}{{(exceed 24 hours)}} & - & - & - & - \\
    & {Gurobi-Bar} & {3.3e+04} & 26 & 1.9e-13 & 2.4e-12 & 4.1e-14 & 2.2e-09 \\
    & SmoothNewton & 2.5e+03 & 177 & 2.3e-13 & 0.0e+00 & 1.2e-10 & 8.6e-09\\[2pt] 
\end{longtable}
\end{footnotesize}

\section{Conclusions} \label{sec:conclusions}

In this paper, we have proposed a squared smoothing Newton method via the Huber smoothing function for solving the KKT system of the discrete optimal transport (OT) problem directly. Besides having appealing convergence properties, the proposed method is able to exploit the solution sparsity of the OT problem to greatly reduce the computational costs. We have also extended the method to solve WB problems. Our numerical experiments on solving a large set of OT and WB instances have demonstrated the excellent practical performance of the proposed method when compared to the state-of-the-art solvers. 

There are some issues concerning the proposed method that deserve further investigation. For example, the condition for ensuring the local superlinear convergence rate is rather strong and can not be verified a priori in general. Since we have observed the fast local convergence rate empirically based on our numerical tests, there may exist more relaxed conditions for ensuring such strong convergence properties, which deserve further studies. Moreover, it is clear that the method can also be applied to solving general linear programming (LP) problems. But the efficiency and robustness of the proposed method for solving general LP problems need further investigation. 

%%
%% The acknowledgments section is defined using the "acks" environment
%% (and NOT an unnumbered section). This ensures the proper
%% identification of the section in the article metadata, and the
%% consistent spelling of the heading.
\section*{Acknowledgement}
The research of Kim-Chuan Toh is supported by the Ministry of Education, Singapore, under its Academic Research Fund Tier 3 grant call (MOE-2019-T3-1-010).

%%
%% The next two lines define the bibliography style to be used, and
%% the bibliography file.
\bibliographystyle{unsrt}
\bibliography{sample-base}

\begin{thebibliography}{10}

\bibitem{fuDetectingPhishingWeb2006}
Anthony~Y Fu, Liu Wenyin, and Xiaotie Deng.
\newblock Detecting phishing web pages with visual similarity assessment based
  on earth mover's distance ({EMD}).
\newblock {\em IEEE transactions on dependable and secure computing},
  3(4):301--311, 2006.

\bibitem{graumanFastContourMatching2004}
Kristen Grauman and Trevor Darrell.
\newblock Fast contour matching using approximate earth mover's distance.
\newblock In {\em Proceedings of the 2004 {{IEEE Computer Society Conference}}
  on {{Computer Vision}} and {{Pattern Recognition}}, 2004. {{CVPR}} 2004.},
  volume~1, pages I--I. {IEEE}, 2004.

\bibitem{kendalQuantifyingPlantColour2013}
Dave Kendal, Cindy~E. Hauser, Georgia~E. Garrard, Sacha Jellinek, Katherine~M.
  Giljohann, and Joslin~L. Moore.
\newblock Quantifying plant colour and colour difference as perceived by humans
  using digital images.
\newblock {\em PLOS ONE}, 8(8):e72296, 2013.

\bibitem{peyre2019computational}
Gabriel Peyr{\'e} and Marco Cuturi.
\newblock Computational optimal transport: With applications to data science.
\newblock {\em Foundations and Trends{\textregistered} in Machine Learning},
  11(5-6):355--607, 2019.

\bibitem{rubnerEarthMoverDistance2000}
Yossi Rubner, Carlo Tomasi, and Leonidas~J. Guibas.
\newblock The earth mover's distance as a metric for image retrieval.
\newblock {\em International J. of Computer Vision}, 40(2):99--121, 2000.

\bibitem{santambrogio2015optimal}
Filippo Santambrogio.
\newblock Optimal transport for applied mathematicians.
\newblock {\em Birk{\"a}user, NY}, 55(58-63):94, 2015.

\bibitem{villani2009optimal}
C{\'e}dric Villani.
\newblock {\em Optimal transport: old and new}, volume 338.
\newblock Springer, 2009.

\bibitem{agueh2011barycenters}
Martial Agueh and Guillaume Carlier.
\newblock Barycenters in the {W}asserstein space.
\newblock {\em SIAM J. Mathematical Analysis}, 43(2):904--924, 2011.

\bibitem{cuturi2014fast}
Marco Cuturi and Arnaud Doucet.
\newblock Fast computation of {W}asserstein barycenters.
\newblock In {\em International Conference on Machine Learning}, pages
  685--693. PMLR, 2014.

\bibitem{ye2014scaling}
Jianbo Ye and Jia Li.
\newblock Scaling up discrete distribution clustering using {ADMM}.
\newblock In {\em 2014 IEEE International Conference on Image Processing
  (ICIP)}, pages 5267--5271. IEEE, 2014.

\bibitem{rabin2012wasserstein}
Julien Rabin, Gabriel Peyr{\'e}, Julie Delon, and Marc Bernot.
\newblock Wasserstein barycenter and its application to texture mixing.
\newblock In {\em Scale Space and Variational Methods in Computer Vision: Third
  International Conference, SSVM 2011, Ein-Gedi, Israel, May 29--June 2, 2011,
  Revised Selected Papers 3}, pages 435--446. Springer, 2012.

\bibitem{chiappori2010hedonic}
Pierre-Andr{\'e} Chiappori, Robert~J McCann, and Lars~P Nesheim.
\newblock Hedonic price equilibria, stable matching, and optimal transport:
  equivalence, topology, and uniqueness.
\newblock {\em Economic Theory}, 42:317--354, 2010.

\bibitem{galichon2018optimal}
Alfred Galichon.
\newblock {\em Optimal transport methods in economics}.
\newblock Princeton University Press, 2018.

\bibitem{anderes2016discrete}
Ethan Anderes, Steffen Borgwardt, and Jacob Miller.
\newblock Discrete {W}asserstein barycenters: Optimal transport for discrete
  data.
\newblock {\em Mathematical Methods of Operations Research}, 84:389--409, 2016.

\bibitem{wright1997primal}
Stephen~J Wright.
\newblock {\em Primal-dual interior-point methods}.
\newblock SIAM, 1997.

\bibitem{dantzig2016linear}
George Dantzig.
\newblock Linear programming and extensions.
\newblock In {\em Linear programming and extensions}. Princeton university
  press, 2016.

\bibitem{li2020asymptotically}
Xudong Li, Defeng Sun, and Kim-Chuan Toh.
\newblock An asymptotically superlinearly convergent semismooth {N}ewton
  augmented {L}agrangian method for linear programming.
\newblock {\em SIAM J. Optimization}, 30(3):2410--2440, 2020.

\bibitem{orlin1997polynomial}
James~B Orlin.
\newblock A polynomial time primal network simplex algorithm for minimum cost
  flows.
\newblock {\em Mathematical Programming}, 78(2):109--129, 1997.

\bibitem{schrieber2016dotmark}
J{\"o}rn Schrieber, Dominic Schuhmacher, and Carsten Gottschlich.
\newblock {DOT}mark--a benchmark for discrete optimal transport.
\newblock {\em IEEE Access}, 5:271--282, 2016.

\bibitem{luenberger1984linear}
David~G Luenberger and Yinyu Ye.
\newblock {\em Linear and nonlinear programming}, volume~2.
\newblock Springer, 1984.

\bibitem{cuturi2013sinkhorn}
Marco Cuturi.
\newblock Sinkhorn distances: Lightspeed computation of optimal transport.
\newblock {\em Advances in Neural Information Processing Systems}, 26, 2013.

\bibitem{schmitzer2019stabilized}
Bernhard Schmitzer.
\newblock Stabilized sparse scaling algorithms for entropy regularized
  transport problems.
\newblock {\em SIAM J. Scientific Computing}, 41(3):A1443--A1481, 2019.

\bibitem{chu2020efficient}
Hong T.~M. Chu, Ling Liang, Kim-Chuan Toh, and Lei Yang.
\newblock An efficient implementable inexact entropic proximal point algorithm
  for a class of linear programming problems.
\newblock {\em Computational Optimization and Applications}, 85:107--146, 2020.

\bibitem{yang2022bregman}
Lei Yang and Kim-Chuan Toh.
\newblock Bregman proximal point algorithm revisited: A new inexact version and
  its inertial variant.
\newblock {\em SIAM J. Optimization}, 32(3):1523--1554, 2022.

\bibitem{liu2022multiscale}
Yiyang Liu, Zaiwen Wen, and Wotao Yin.
\newblock A multiscale semi-smooth {N}ewton method for optimal transport.
\newblock {\em Journal of Scientific Computing}, 91(2):39, 2022.

\bibitem{merigot2011multiscale}
Quentin M{\'e}rigot.
\newblock A multiscale approach to optimal transport.
\newblock In {\em Computer Graphics Forum}, volume~30, pages 1583--1592. Wiley
  Online Library, 2011.

\bibitem{schmitzer2016sparse}
Bernhard Schmitzer.
\newblock A sparse multiscale algorithm for dense optimal transport.
\newblock {\em J. of Mathematical Imaging and Vision}, 56(2):238--259, 2016.

\bibitem{benamou2015iterative}
Jean-David Benamou, Guillaume Carlier, Marco Cuturi, Luca Nenna, and Gabriel
  Peyr{\'e}.
\newblock Iterative {B}regman projections for regularized transportation
  problems.
\newblock {\em SIAM J. Scientific Computing}, 37(2):A1111--A1138, 2015.

\bibitem{yang2021fast}
Lei Yang, Jia Li, Defeng Sun, and Kim-Chuan Toh.
\newblock A fast globally linearly convergent algorithm for the computation of
  {W}asserstein barycenters.
\newblock {\em J. of Machine Learning Research}, 22(1):984--1020, 2021.

\bibitem{ye2017fast}
Jianbo Ye, Panruo Wu, James~Z Wang, and Jia Li.
\newblock Fast discrete distribution clustering using {W}asserstein barycenter
  with sparse support.
\newblock {\em IEEE Transactions on Signal Processing}, 65(9):2317--2332, 2017.

\bibitem{zhang2022efficient}
Guojun Zhang, Yancheng Yuan, and Defeng Sun.
\newblock An efficient {HPR} algorithm for the {W}asserstein barycenter problem
  with $ {O} (\mathrm{Dim (P)}/\varepsilon) $ computational complexity.
\newblock {\em arXiv preprint arXiv:2211.14881}, 2022.

\bibitem{ye1992potential}
Yinyu Ye.
\newblock A potential reduction algorithm allowing column generation.
\newblock {\em SIAM J. Optimization}, 2(1):7--20, 1992.

\bibitem{tits2006constraint}
Andr{\'e}~L Tits, Pierre-Antoine Absil, and William~P Woessner.
\newblock Constraint reduction for linear programs with many inequality
  constraints.
\newblock {\em SIAM J. Optimization}, 17(1):119--146, 2006.

\bibitem{qi1993nonsmooth}
Liqun Qi and Jie Sun.
\newblock A nonsmooth version of {N}ewton's method.
\newblock {\em Mathematical programming}, 58(1):353--367, 1993.

\bibitem{hu2021semismooth}
Hao Hu, Haesol Im, Xinxin Li, and Henry Wolkowicz.
\newblock A semismooth {N}ewton-type method for the nearest doubly stochastic
  matrix problem.
\newblock {\em Mathematics of Operations Research}, in print, 2023.

\bibitem{mifflin1977semismooth}
Robert Mifflin.
\newblock Semismooth and semiconvex functions in constrained optimization.
\newblock {\em SIAM J. Control and Optimization}, 15(6):959--972, 1977.

\bibitem{rademacher1919partielle}
Hans Rademacher.
\newblock {\"U}ber partielle und totale differenzierbarkeit von funktionen
  mehrerer variabeln und {\"u}ber die transformation der doppelintegrale.
\newblock {\em Mathematische Annalen}, 79(4):340--359, 1919.

\bibitem{clarke1990optimization}
Frank~H Clarke.
\newblock {\em Optimization and nonsmooth analysis}.
\newblock SIAM, 1990.

\bibitem{chen1998global}
Xiaojun Chen, Liqun Qi, and Defeng Sun.
\newblock Global and superlinear convergence of the smoothing {N}ewton method
  and its application to general box constrained variational inequalities.
\newblock {\em Mathematics of Computation}, 67(222):519--540, 1998.

\bibitem{kong2008regularized}
Lingchen Kong, Jie Sun, and Naihua Xiu.
\newblock A regularized smoothing {N}ewton method for symmetric cone
  complementarity problems.
\newblock {\em SIAM J. Optimization}, 19(3):1028--1047, 2008.

\bibitem{qi2000new}
Liqun Qi, Defeng Sun, and Guanglu Zhou.
\newblock A new look at smoothing {N}ewton methods for nonlinear
  complementarity problems and box constrained variational inequalities.
\newblock {\em Mathematical programming}, 87(1):1--35, 2000.

\bibitem{sun2004squared}
Jie Sun, Defeng Sun, and Liqun Qi.
\newblock A squared smoothing {N}ewton method for nonsmooth matrix equations
  and its applications in semidefinite optimization problems.
\newblock {\em SIAM J. Optimization}, 14(3):783--806, 2004.

\bibitem{sun2001solving}
Defeng Sun and Liqun Qi.
\newblock Solving variational inequality problems via smoothing-nonsmooth
  reformulations.
\newblock {\em J. of Computational and Applied Mathematics}, 129(1-2):37--62,
  2001.

\bibitem{chen1993non}
Bintong Chen and Patrick~T Harker.
\newblock A non-interior-point continuation method for linear complementarity
  problems.
\newblock {\em SIAM J. on Matrix Analysis and Applications}, 14(4):1168--1190,
  1993.

\bibitem{kanzow1996some}
Christian Kanzow.
\newblock Some noninterior continuation methods for linear complementarity
  problems.
\newblock {\em SIAM J. Matrix Analysis and Applications}, 17(4):851--868, 1996.

\bibitem{smale2000algorithms}
Steve Smale.
\newblock Algorithms for solving equations.
\newblock In {\em The Collected Papers of Stephen Smale: Volume 3}, pages
  1263--1286. World Scientific, 2000.

\bibitem{zang1980smoothing}
Israel Zang.
\newblock A smoothing-out technique for min—max optimization.
\newblock {\em Mathematical Programming}, 19(1):61--77, 1980.

\bibitem{qi1999survey}
Liqun Qi and Defeng Sun.
\newblock A survey of some nonsmooth equations and smoothing {N}ewton methods.
\newblock In {\em Progress in Optimization}, pages 121--146. Springer, 1999.

\bibitem{kong2011equivalent}
Lingchen Kong, Levent Tun{\c{c}}el, and Naihua Xiu.
\newblock Equivalent conditions for {J}acobian nonsingularity in linear
  symmetric cone programming.
\newblock {\em Journal of Optimization Theory and Applications},
  148(2):364--389, 2011.

\bibitem{chan2008constraint}
Zi~Xian Chan and Defeng Sun.
\newblock Constraint nondegeneracy, strong regularity, and nonsingularity in
  semidefinite programming.
\newblock {\em SIAM J. Optimization}, 19(1):370--396, 2008.

\bibitem{lecun1998gradient}
Yann LeCun, L{\'e}on Bottou, Yoshua Bengio, and Patrick Haffner.
\newblock Gradient-based learning applied to document recognition.
\newblock {\em Proceedings of the IEEE}, 86(11):2278--2324, 1998.

\bibitem{nene1996columbia}
Sameer~A Nene, Shree~K Nayar, Hiroshi Murase, et~al.
\newblock Columbia object image library (coil-20).
\newblock 1996.

\bibitem{belhumeur1997eigenfaces}
Peter~N. Belhumeur, Joao~P Hespanha, and David~J. Kriegman.
\newblock Eigenfaces vs. fisherfaces: Recognition using class specific linear
  projection.
\newblock {\em IEEE Transactions on Pattern Analysis and Machine Intelligence},
  19(7):711--720, 1997.

\end{thebibliography}

%%
%% If your work has an appendix, this is the place to put it.
\appendix

\section{Proofs}
\begin{proof}[{\bf Proof of Theorem \ref{thm-superlinear-rate}}]
	Since $ (\bar \epsilon,\bar x, \bar y) $ is an accumulation point of the sequence $ \{(\epsilon^k, x^k, y^k)\} $, Theorem \ref{thm-global-convergence} implies that $ \widehat{\mathcal{E}}(\bar\epsilon, \bar x, \bar y) = 0 $. In particular, it holds that $ \bar\epsilon = 0 $ and $ \zeta_k 
	= r\lVert\widehat{\mathcal{E}}(\epsilon^k, x^k, y^k)\rVert^{1+\tau} $ for $ k\geq 0 $ sufficiently large. By \cite[Proposition 3.1]{qi1993nonsmooth} and the fact that $ \epsilon^k > 0 $, the nonsingularity of the set $ \partial \widehat{\mathcal{E}}(\bar\epsilon, \bar x, \bar y)  $ implies (see e.g. \cite[Proposition 3.1]{qi1993nonsmooth}) that 
	\begin{equation} \label{eq-inv-jacobian-bdd}
		\left\lVert \widehat{\mathcal{E}}'(\epsilon^k, x^k, y^k)^{-1}\right\rVert = O(1),
	\end{equation}
	for $ k\geq 0 $ sufficiently large. 
 For notational simplicity, we let
 $w^k = (\epsilon^k; x^k; y^k)$,
 $\bar{w} = (\bar{\epsilon}; \bar{x}; \bar{y})$,
 and $\Delta w^k = (\Delta\epsilon^k; \Delta x^k; \Delta y^{k})$. 
 Using \eqref{eq-inv-jacobian-bdd}, we see that 
	\begin{equation} \label{eq-thm-rate-1}
		\begin{aligned}
			&\; \norm{w^k+\Delta w^k - \bar{w}}
			= \;  
   \norm{w^k + \widehat{\mathcal{E}}'(w^k)^{-1}\left(\begin{pmatrix}
				\zeta_k\epsilon^0 \\ 0 \\ 0
			\end{pmatrix} - \widehat{\mathcal{E}}(w^k)\right) - 
			\bar{w}}\\
			=&\; \left\lVert-\widehat{\mathcal{E}}'(w^k)^{-1}\left(\widehat{\mathcal{E}}(w^k) - \widehat{\mathcal{E}}'(w^k)
             (w^k-\bar{w}) - 
    \begin{pmatrix}
					\zeta_k\epsilon^0 \\ 0 \\ 0
				\end{pmatrix}\right)  \right\rVert \\
			= & \; O\left(\norm{\widehat{\mathcal{E}}(w^k) - \widehat{\mathcal{E}}'(w^k)(w^k-\bar{w}) } \right) 
    + O\left(\zeta_k\epsilon^0\right) \\
			= & \; O\left(\norm{\widehat{\mathcal{E}}(w^k) - \widehat{\mathcal{E}}'(w^k) (w^k-\bar{w})} \right) + O\left(\left\lVert\widehat{\mathcal{E}}(\epsilon^k, x^k, y^k)\right\rVert^{1+\tau}\right).
		\end{aligned} 
	\end{equation}
	Since $ \widehat{\mathcal{E}} $ is strongly semismooth everywhere, we have that, for $ k\geq 0 $ sufficiently large,
	\begin{equation} \label{eq-thm-rate-2}
 \norm{\widehat{\mathcal{E}}(w^k) - \widehat{\mathcal{E}}'(w^k) (w^k-\bar{w})}
 = 
				O\left(\norm{w^k-\bar{w}}^2 \right) .
	\end{equation}
	Moreover, since $ \widehat{\mathcal{E}} $ is locally Lipchitz continuous at $ \bar{w} $, it follows that 
	\begin{equation} \label{eq-thm-rate-3}
		\left\lVert\widehat{\mathcal{E}}(w^k)\right\rVert^{1+\tau} = \left\lVert\widehat{\mathcal{E}}(w^k) - \widehat{\mathcal{E}}(\bar{w})\right\rVert^{1+\tau} = O\left(\norm{w^k-\bar{w}}^{1+\tau}\right).
	\end{equation}
	
	By combining \eqref{eq-thm-rate-1}, \eqref{eq-thm-rate-2} and \eqref{eq-thm-rate-3}, we get for $k\geq 0$ sufficiently large, 
	\begin{equation} \label{eq-thm-rate-4}
	\begin{aligned}
		 	\norm{w^k+\Delta w^k -\bar{w}} = &\; O\left(\norm{w^k-\bar{w}}^{2}\right) + O\left(\norm{w^k-\bar{w}}^{1+\tau}\right) \;
				=\;  O\left(\norm{w^k-\bar{w}}^{1+\tau}\right).
		 \end{aligned}
	\end{equation}
	
	On the other hand, by \eqref{eq-inv-jacobian-bdd}, we can conclude that 
 $ \big\|(\widehat{\mathcal{E}}'(w^k))^{-1}\big\|$ is bounded away from zero for $ k\geq 0 $ sufficiently large. Therefore, it follows from \eqref{eq-thm-rate-2} that, for $ k\geq 0 $ sufficiently large,
	\begin{equation} \label{eq-thm-rate-5}
		\norm{w^k-\bar{w}} = O\left(\left\lVert\widehat{\mathcal{E}}(\epsilon^k, x^k, y^k) \right\rVert\right).
	\end{equation}

	Using \eqref{eq-thm-rate-4}, \eqref{eq-thm-rate-5} and the fact that $ \phi  $ and $ \widehat{\mathcal{E}} $ are both locally Lipschitz continuous at $ (\bar \epsilon, \bar x, \bar y) $, we get for $ k\geq 0 $ sufficiently large that
		\begin{align*} 
			 &\:\phi( w^k + \Delta w^k) = \; \left\lVert \widehat{\mathcal{E}}(w^k + \Delta w^k) - \widehat{\mathcal{E}}(\bar w)\right\rVert^2 
			= \; O\left( \norm{w^k+\Delta w^k -\bar{w}}^2 \right) 
			= \; O\left(\norm{w^k-\bar{w}}^{2(1+\tau)}\right) \\
			= &\; O\left(\left\lVert\widehat{\mathcal{E}}(w^k) \right\rVert^{2(1+\tau)}\right) 
			= \; O\left(\phi(w^k)^{1+\tau}\right) 
			= \; o\left(\phi(w^k)\right).
		\end{align*}
	This shows that, for $ k\geq 0 $ sufficiently large, 
 $(\epsilon^{k+1},x^{k+1},y^{k+1}) = (\epsilon^k,x^k,y^k) +
 (\Delta \epsilon^k, \Delta x^k, \Delta y^k)$,
	i.e., the unit step size is eventually accepted. Therefore, the proof is completed.
\end{proof}

\end{document}